\documentclass[reqno,11pt]{amsart}

\usepackage{amsthm, amsmath, amssymb}
\usepackage[utf8]{inputenc}
\usepackage[T1]{fontenc}

\usepackage[hidelinks]{hyperref}

\usepackage{bm}
\usepackage[margin=1in]{geometry}

\usepackage[noabbrev,capitalize,nameinlink]{cleveref}
\crefname{equation}{}{}



\newtheorem{theorem}{Theorem}[section]

\newtheorem{lemma}[theorem]{Lemma}

\newtheorem{conjecture}[theorem]{Conjecture}

\theoremstyle{definition}

\newtheorem{example}[theorem]{Example}

\theoremstyle{remark}
\newtheorem*{remark}{Remark}

\newcommand{\abs}[1]{\left\lvert#1\right\rvert}

\newcommand{\ceil}[1]{\left\lceil #1 \right\rceil}
\newcommand{\paren}[1]{\left( #1 \right)}

\newcommand{\CC}{\mathbb{C}}

\newcommand{\FF}{\mathbb{F}}
\newcommand{\RR}{\mathbb{R}}

\newcommand{\ZZ}{\mathbb{Z}}

\newcommand{\cL}{\mathcal L}
\newcommand{\cJ}{\mathcal J}

\newcommand{\cF}{\mathcal F}

\title{Joints tightened}

\author{Hung-Hsun Hans Yu}
\author{Yufei Zhao}

\address{Yu:   Department of Mathematics, Princeton University, Princeton, NJ }
\email{hansonyu@princeton.edu}

\address{Zhao: Department of Mathematics, Massachusetts Institute of Technology, Cambridge, MA, USA}
\email{yufeiz@mit.edu}

\begin{document}

\begin{abstract}
	Given a set of lines in $\mathbb{R}^3$, a \emph{joint} is a point passed through by three lines not all lying in some plane.	The joints problem asks for the maximum number of joints formed by $L$ lines.
	Using the polynomial method, Guth and Katz proved that the answer is $O(L^{3/2})$, which is best possible up to a constant factor.
	We prove a new upper bound of $(\sqrt{2}/3) L^{3/2}$, which matches, up to a $1+o(1)$ factor, the best known construction: place $k$ generic planes, and use their pairwise intersections to form $\binom{k}{2}$ lines and their triple-wise intersections to form $\binom{k}{3}$ joints. 
	Guth conjectured that this construction is optimal. 
	Our result generalizes to arbitrary dimensions and fields.
	
	Our technique builds on the work on Ruixiang Zhang proving the multijoints conjecture via an extension of the polynomial method. We set up a variational problem to control the high order of vanishing of a polynomial at each joint. 
\end{abstract}

\maketitle

\section{Introduction}

The joints problem asks to determine, given a set of $L$ lines in $\RR^d$ (or $\FF^d$, where throughout this paper $\FF$ denotes an arbitrary field), what is the maximum number of joints formed by these lines. Given a set of lines, a \emph{joint} is defined to be a point passed through by $d$ lines from the set that do not all lie in the same hyperplane. Note that we only count each point at most once as a joint, even if more than $d$ lines pass through it. Here one should think of $d$ as fixed and $L$ as large. The following example appears in the original paper on the joints problem~\cite{CEGPSSS92}.

\begin{example} \label{ex:tight}
Take $k$ generic hyperplanes (provided that $|\FF|$ is large enough) forming $L = \binom{k}{d-1}$ lines from $(d-1)$-wise intersections and
\[
J = \binom{k}{d} = \paren{\frac{(d-1)!^{1/(d-1)}}{d} - o(1)} L^{d/(d-1)}
\]
joints from $d$-wise intersections. Here $d$ is fixed and $L \to \infty$.
\end{example}

In their breakthrough work, Guth and Katz~\cite{GK10} proved that, in $\RR^3$, \cref{ex:tight} has the maximum number of joints up to a constant factor.
Their work was a precursor to their celebrated solution to the Erd\H{o}s distinct distances problem~\cite{GK15}.
Their proof used the polynomial method~\cite{Guth16}, as inspired by Dvir's stunning solution to the finite field Kakeya problem~\cite{Dvir09}. 
The joints theorem was extended to general dimensions independently by Quilodr\'an~\cite{Qui09} and Kaplan--Sharir--Shustin~\cite{KSS10}, and these techniques and results also extend to arbitrary fields (also see~\cite{CI14,Dvir10,Tao14}).

\begin{theorem}[Joints theorem~\cite{GK10,KSS10,Qui09}] \label{thm:joints}
	For every $d$ there is some constant $C_d$ so that $L$ lines in $\FF^d$ form at most $C_d L^{d/(d-1)}$ joints.
\end{theorem}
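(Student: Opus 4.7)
The approach is the polynomial-method argument originating with Guth--Katz~\cite{GK10} and refined by Quilodr\'an~\cite{Qui09} and Kaplan--Sharir--Shustin~\cite{KSS10}. The entire theorem reduces to one extremal lemma: in any configuration of lines forming $J \ge 1$ joints, some line in the configuration contains at most $C'_d J^{1/d}$ joints. Granted the lemma, a greedy deletion argument finishes the proof. Starting from the given $L$ lines and $J$ joints, one iteratively locates such a sparse line $\ell$ (the bound $C'_d J^{1/d}$ remains a uniform upper bound, since the joint count only decreases) and deletes it, noting that any joint off $\ell$ is unaffected because it still retains its $d$ concurrent lines in spanning directions. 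After all $L$ lines have been removed no joints remain, so summing the losses gives $J \le L \cdot C'_d J^{1/d}$, which rearranges to $J \le (C'_d)^{d/(d-1)} L^{d/(d-1)}$.

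To prove the lemma I would use the minimum-degree trick. Since the space of polynomials in $\FF[x_1, \dots, x_d]$ of degree at most $D$ has dimension $\binom{D+d}{d}$, for some $D = O_d(J^{1/d})$ there exists a nonzero polynomial vanishing at all $J$ joints; take $P$ to be one of \emph{minimum} positive degree $D_0$ with this vanishing property. I claim $P$ does not vanish identically on every line of the configuration. Otherwise, at each joint $p$ the $d$ concurrent lines have directions $v_1, \dots, v_d$ spanning $\FF^d$, and $P(p + t v_i) \equiv 0$ in $t$ for each $i$; comparing the coefficient of $t$ yields $\partial_{v_i} P(p) = \sum_j v_{i,j}(\partial_j P)(p) = 0$, and invertibility of $(v_{i,j})$ forces every coordinate partial $\partial_j P$ to vanish at every joint. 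Since $\deg \partial_j P < D_0$, minimality says each $\partial_j P$ is identically zero, which in characteristic zero would force $P$ to be a (hence zero) constant---contradiction. Therefore some line $\ell$ of the configuration has $P|_\ell \not\equiv 0$, and as a nonzero univariate polynomial of degree at most $D_0$ this restriction has at most $D_0 = O_d(J^{1/d})$ zeros on $\ell$, bounding the number of joints on $\ell$ by the same quantity.

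The main obstacle is that the theorem allows $\FF$ to have positive characteristic $p$, in which case ``every partial $\partial_j P$ is identically zero'' only forces $P \in \FF[x_1^p, \dots, x_d^p]$ rather than $P$ constant. After passing to the algebraic (and therefore perfect) closure of $\FF$---which does not decrease the joint count---one can write $P = R^p$ for some polynomial $R$ of strictly smaller degree than $D_0$, and since $R(q) = 0$ if and only if $R(q)^p = 0$ at a joint $q$, this $R$ still vanishes at every joint, contradicting the minimality of $D_0$. Equivalently one may use Hasse derivatives from the start, as in~\cite{Dvir10,Tao14}, and the proof then proceeds uniformly over any field.
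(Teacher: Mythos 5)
Your argument is correct, and it is essentially the classical pruning proof of Quilodr\'an and Kaplan--Sharir--Shustin (with the positive-characteristic issue handled as in \cite{CI14,Dvir10,Tao14}): a minimum-degree polynomial vanishing at all joints, the observation that it cannot vanish identically on every line (else all first partials vanish at every joint, contradicting minimality, with the $P=R^p$ Frobenius step over the perfect closure patching characteristic $p$), hence a line with at most $O_d(J^{1/d})$ joints, and then greedy deletion --- your accounting of which joints survive a deletion and the monotonicity of the joint count are both handled correctly. However, this is a genuinely different route from the one taken in this paper. Here \cref{thm:joints} is only stated as the known benchmark; the proof given in \cref{sec:proof} establishes the stronger \cref{thm:main} and proceeds without any pruning or minimal-degree trick: one prescribes, for each incident pair $(p,\ell)$, a fractional vanishing order $b_{p,\ell}$ subject to the line-sum normalization and the compatibility condition (a) of \cref{lem:cont-ineq}, converts these into genuine high-order vanishing conditions via \cref{lem:vanishing,lem:dis-ineq}, passes to the limit to get $\sum_p \prod_{\ell \ni p} b_{p,\ell} \ge 1/d!$, and then uses a compactness/variational argument (\cref{lem:same-weight}) to equalize the products $\prod_{\ell \ni p} b_{p,\ell}$ before finishing with AM--GM. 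What each approach buys: your argument is shorter and self-contained but the dimension count gives roughly $D_0 \approx (d!J)^{1/d}$ joints on the pruned line, so after the greedy step the constant is about $d!^{1/(d-1)}$, off from the truth by a factor of roughly $d^{d/(d-1)}$; the paper's variational higher-order-vanishing method yields the asymptotically sharp constant $C_d = (d-1)!^{1/(d-1)}/d$, matching \cref{ex:tight} up to $1+o(1)$, and extends to the multijoints and flats variants in \cref{sec:multijoint,sec:higher-dimension}, which the single-vanishing pruning argument does not give.
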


Guth~\cite[Section 2.5]{Guth16} conjectured (at least for $d=3$) that \cref{ex:tight} is the exact optimum, i.e., $\binom{k}{d-1}$ lines in $\FF^d$ form at most $\binom{k}{d}$ joints for every positive integer $k$. Our main result is a new upper bound on the number of joints confirming Guth's conjecture up to a $1+o(1)$ factor.

\begin{theorem}[Main theorem] \label{thm:main}
	The number of joints formed by $L$ lines in $\FF^d$ is at most
	\[
	\frac{(d-1)!^{1/(d-1)}}{d} L^{d/(d-1)}.
	\]
\end{theorem}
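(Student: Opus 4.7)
The plan is to adapt Ruixiang Zhang's variational extension of the polynomial method from the multijoints problem. The classical Guth--Katz/Quilodr\'an/Kaplan--Sharir--Shustin argument takes a minimum-degree polynomial vanishing to order one at every joint; this recovers the right order $L^{d/(d-1)}$ but leaks a constant in the dimension count. The sharpening idea is to assign each joint $x$ its own multiplicity $\mu_x \in \ZZ_{\geq 0}$, to be fixed by a variational problem, so that the dimension count and the line-vanishing constraints can be balanced exactly, extracting the sharp constant $\frac{(d-1)!^{1/(d-1)}}{d}$.

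Concretely, let $X \subseteq \FF^d$ denote the set of joints, with $|X| = J$. For a choice of multiplicities $(\mu_x)_{x \in X}$, a standard dimension count produces a non-zero polynomial $P$ of degree at most $N$ vanishing to order $\geq \mu_x$ at each $x$ provided
\[
\binom{N+d}{d} > \sum_{x \in X} \binom{\mu_x + d - 1}{d}.
\]
Two structural facts then drive the argument. First, \emph{line vanishing}: for any line $\ell$, the restriction $P|_\ell$ is a univariate polynomial of degree $\leq N$ with a zero of multiplicity $\geq \mu_y$ at every joint $y \in \ell$, so $P|_\ell \equiv 0$ whenever $\sum_{y \in \ell \cap X} \mu_y > N$. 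Second, the \emph{gradient argument}: if $P|_{\ell_j} \equiv 0$ for $d$ non-coplanar lines $\ell_1, \ldots, \ell_d$ through a joint $x$, then the directional derivatives of $P$ along those spanning directions vanish at $x$, forcing $\nabla P(x) = 0$. Since $P$ is non-zero, there must then exist a joint $x$ and a line $\ell \in L$ through $x$ on which $P$ does not vanish, whence the bad-line constraint $\sum_{y \in \ell \cap X} \mu_y \leq N$.

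The final bound will come from solving a variational problem in the $\mu_x$'s. I would choose $(\mu_x)$ so that the existence condition (which pins $N$ down) and the bad-line constraints are simultaneously extremal, and then a direct computation should yield $J \leq \frac{(d-1)!^{1/(d-1)}}{d} L^{d/(d-1)}$; in matching \cref{ex:tight}, the extremum should correspond to the highly symmetric $\binom{k}{d-1}$-line, $\binom{k}{d}$-joint configuration. The hard part will be closing the loop sharply: the gradient argument by itself only gives order-two vanishing, and the bad-line constraint is existential (one line at a time), so converting these local pieces into a global sharp bound requires a careful iteration---likely an inductive removal of lines or joints, in the spirit of Zhang's multijoints global accounting---done in such a way that no constant factor is lost at any step.
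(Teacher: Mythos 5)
Your plan captures the right philosophy (choose vanishing orders variationally and balance the dimension count against per-line constraints), but the specific mechanism you propose has a genuine gap that cannot be patched in the form stated. You assign each joint a single isotropic multiplicity $\mu_x$ and try to close a contradiction loop via ``$P|_\ell \equiv 0$ on every line, then $\nabla P = 0$ at every joint, then iterate.'' This loop provably cannot close without loss: in the extremal configuration of \cref{ex:tight}, the product of the linear forms of the $k$ generic hyperplanes is a nonzero polynomial of degree $k \approx (d!\,J)^{1/d}$ that vanishes identically on every line of the configuration and to isotropic order $d$ at every joint (already in the plane, three lines forming a triangle give the miniature counterexample $L_1L_2L_3$, which vanishes to order $2$ at each vertex despite having degree $3 < 4$). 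So a nonzero low-degree polynomial can satisfy all the vanishing conclusions your argument extracts, and no contradiction follows; this is exactly the point where the earlier proofs resort to pruning and iterated differentiation and lose constant factors. You flag this yourself (``the hard part will be closing the loop sharply''), but that unclosed loop is the entire difficulty, and moreover the isotropic bookkeeping $\binom{\mu_x+d-1}{d}$ versus $\sum_{y\in\ell}\mu_y$ is not the accounting that yields the sharp constant.

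The paper's proof avoids the contradiction/iteration scheme altogether by changing the notion of vanishing: each joint $p$ gets a separate order $\beta_{p,\ell}$ for each of its $d$ lines, and ``vanishing to order $(\beta_1,\dots,\beta_d)$'' is a box condition on coefficients in coordinates adapted to the $d$ line directions, costing $\prod_{\ell\ni p}\beta_{p,\ell}$ linear constraints per joint. Together with auxiliary potentials $\alpha_p$ and a consistency condition along each line, a minimality argument (\cref{lem:vanishing}) shows \emph{directly} that any nonzero polynomial vanishing to these orders has degree at least $n$ whenever $\sum_{p\in\ell}\beta_{p,\ell}\ge n$ on every line -- no gradient argument, no pruning, no removal of lines or joints. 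Parameter counting then gives $\sum_p\prod_{\ell\ni p}\beta_{p,\ell}\ge\binom{n+d-1}{d}$, and the continuous limit (\cref{lem:cont-ineq}) gives $\sum_p\prod_{\ell\ni p}b_{p,\ell}\ge 1/d!$. Finally, the variational problem is not solved by exhibiting an explicit optimum: a compactness argument (\cref{lem:same-weight}) equalizes $\prod_{\ell\ni p}b_{p,\ell}$ over all joints of a connected component, and AM--GM converts $\sum_{p\in\ell}b_{p,\ell}=1$ into $J\le \frac{(d-1)!^{1/(d-1)}}{d}L^{d/(d-1)}$. These three ingredients -- the anisotropic box vanishing with the $\alpha_p$ consistency, the degree lower bound proved by minimality rather than by contradiction, and the compactness-plus-AM--GM solution of the variational problem -- are the content of the proof and are all absent from your proposal.
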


Our result is a rare instance in incidence geometry where the sharp constant is determined. 
Very recently (after this paper first appeared), the finite field Kakeya problem was solved up to a $1+o(1)$ factor \cite{BW21,DKSS13,SS08}.
The only other sharp result that we are aware of is Green and Tao's theorem on ordinary lines~\cite{GT13}.
In contrast, in many classical results, such as the Szemer\'edi--Trotter theorem~\cite{ST83}, the exact constant factor is unknown.

The basic idea of the polynomial method in incidence geometry is that one can assert the existence of a nonzero polynomial that satisfies various linear constraints on its coefficients provided that there are enough degrees of freedom. 
A typical constraint asks the polynomial to vanish at some point.

Zhang~\cite{Zhang} proved a generalization of the joints problem, known as the multijoints problem (see \cref{sec:multijoint}), by studying higher order vanishings of a polynomial. 
The method of higher order vanishings was also used earlier to improve the upper bound to the finite field Kakeya problem~\cite{DKSS13}. 

Our main innovation is to set up a variational problem whose variables track the desired orders of vanishing of a polynomial at each joint in the configuration.
In contrast to earlier approaches, we do not specify in advance the order of vanishing, and instead show via a compactness argument that the associated variational problem has a desirable optimum. 
Curiously, our argument is implicit in two ways: the existence of the polynomial as well as the existence of a good choice of vanishing orders.

Wolff~\cite{Wol99} observed a connection between the joints problem and the Kakeya problem, a central problem in harmonic analysis. The joints problem can be viewed as a discrete analog of the multilinear Kakeya problem, which was solved in the non-endpoint case by Bennett, Carbery, and Tao~\cite{BCT06}, and then later solved in the endpoint case by Guth \cite{Guth10} using the polynomial method. Roughly speaking, in the Kakeya setup, one considers families of well separated tubes and the goal is to upper bound the measure of the set of points that are contained in many tubes.

\cref{sec:proof} contains the complete proof of \cref{thm:main}.
In the rest of the paper, we extend our arguments to some variants of the joints problem.
In \cref{sec:multijoint}, we apply the technique to improve the upper bound in the multijoints problem.
In \cref{sec:higher-dimension}, we consider a higher dimensional generalization of the joints problem where lines are replaced by flats.

For all these variants of the joints problem, we conjecture that the optimal configurations also come from extensions of \cref{ex:tight} (see \cref{sec:multijoint,sec:higher-dimension}). These special geometric configurations can be encoded by edge-colored hypergraphs, so that the problem becomes a multicolored Kruskal--Katona-type extremal set theory/hypergraph problem. Curiously, our proof of \cref{thm:main} gives a seemingly new polynomial method proof of an old fact that cliques asymptotically maximize the triangle density in a graph of given edge density.

\section{Proof of the main theorem} \label{sec:proof}

A \emph{joints configuration} $(\cJ, \cL)$ consists of a set $\cL$ of lines in $\FF^d$ and  a set $\cJ$ of joints formed by these lines. 
We abuse the notation ``$p\in\ell$'' slightly to deal with the cases when some point has more than $d$ lines passing through it. At each joint $p$, we pick arbitrarily $d$ lines from $\cL$ passing through $p$ in linearly independent directions, and we write ``$p \in \ell$'' (and likewise with ``$\ell$ contains $p$'', etc.) only if $\ell$ is one of these $d$ chosen lines. 

Given a polynomial $g \in \FF[x_1, \dots, x_d]$, a joint $p \in \cJ$, an ordering $\ell_1, \dots, \ell_d$ of the $d$ lines passing through $p$, and a vector $(\beta_1, \dots, \beta_d)$ of nonnegative integers, we say that $g$ \emph{vanishes to order $(\beta_1, \dots, \beta_d)$ at $p$ in directions $(\ell_1, \dots, \ell_d)$} if the coefficient of $x_1^{\omega_1} \cdots x_d^{\omega_d}$ in $g(p + x_1 e_{\ell_1} + \cdots + x_d e_{\ell_d})$ is zero whenever $0 \le \omega_i < \beta_i$ for each $i \in [d]$, where $e_{\ell_i}$ is any nonzero vector parallel to $\ell_i$. (In particular, the condition holds vacuously if $\beta_i=0$ for any $i$.)

Furthermore, given a nonnegative integer $\beta_{p,\ell}$ for each pair $(p,\ell) \in \cJ \times \cL$ with $p \in \ell$, we say that the polynomial $g$ \emph{vanishes to order $\bm \beta = (\beta_{p,\ell})_{(p,\ell) : p \in \ell} \in \ZZ^{|\cJ|d}$ on $(\cJ, \cL)$} if, at every $p \in \cJ$, $g$ vanishes to order $(\beta_{p,\ell})_{\ell: \ell \ni p}$ in the directions $(\ell: \ell \ni p)$.

\begin{lemma} \label{lem:vanishing}
Let $(\cJ, \cL)$ be a joints configuration in $\FF^d$. Let $n$ be a nonnegative integer. 
Let $\alpha_p \in \ZZ$ for each $p \in \cJ$ and $\beta_{p, \ell} \in \ZZ_{\ge 0}$ for each $(p,\ell)\in \cJ \times \cL$ with $p \in \ell$ satisfying
\begin{enumerate}
	\item[(a)] $\beta_{p,\ell} - \alpha_p \geq \beta_{p',\ell} - \alpha_{p'}$ whenever $p,p' \in \ell$ and $\beta_{p',\ell}>0$, and 
	\item[(b)] $\sum_{p : p \in \ell} \beta_{p,\ell} \ge n$ for every $\ell \in \cL$.
\end{enumerate}
Then every nonzero polynomial that vanishes to order $\bm \beta = (\beta_{p,\ell})_{(p,\ell) : p \in \ell}$ on $(\cJ, \cL)$ has degree at least $n$.
\end{lemma}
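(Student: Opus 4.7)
The plan is to prove the contrapositive by induction on $n$: no nonzero polynomial of degree less than $n$ can vanish to order $\bm\beta$ on $(\cJ, \cL)$ under the given hypotheses. The base case $n = 0$ is vacuous.

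For the inductive step, suppose toward a contradiction that $g$ is nonzero of degree strictly less than $n$ and vanishes to order $\bm\beta$. The first observation is line restriction: for each $\ell \in \cL$, the univariate polynomial $g|_\ell$ has degree at most $\deg g < n$ but carries at least $\sum_{p \in \ell}\beta_{p,\ell} \geq n$ zeros counted with multiplicity, so $g|_\ell \equiv 0$; in other words, $g$ vanishes identically on every line of $\cL$. I would then pick any $\ell^* \in \cL$ and pass to the first Hasse derivative $g' := H_{e_{\ell^*}, 1} g$, which has degree strictly less than $n - 1$ and, since $g|_{\ell^*} \equiv 0$, also vanishes identically on $\ell^*$. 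Introduce modified parameters $\bm\alpha'$ with $\alpha'_p := \alpha_p$ when $p \in \ell^*$ and $\alpha'_p := \alpha_p - 1$ otherwise, together with $\bm\beta'$ matching the vanishing of $g'$ at each joint (computed from a Taylor expansion, using also the infinite-order vanishing of $g$ in each line direction at each point of the line). The construction makes $\beta'_{p,\ell} - \alpha'_p = \beta_{p,\ell} - \alpha_p$ whenever $\beta'_{p,\ell} > 0$, so condition (a) is preserved for $(\bm\alpha', \bm\beta')$.

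The main obstacle is verifying condition (b) for $(\bm\beta', n - 1)$: for lines $\ell \neq \ell^*$, a naive Taylor lower bound on the vanishing of $g'$ at joints off $\ell^*$ gives a drop of $|\cJ \cap \ell \setminus \ell^*|$ in $\sum_p \beta'_{p,\ell}$, which can exceed one. The resolution I have in mind is to exploit the stronger vanishing obtained in the first step (namely, $g$ vanishes identically on each line of $\cL$, not merely to a $\bm\beta$-box at each joint) to show that $g'$ actually vanishes to a strictly higher order at joints off $\ell^*$, enough to recover condition (b) with $n' = n - 1$. Once that is settled, the inductive hypothesis forces $g' = 0$.

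Applying this for every $\ell^* \in \cL$ shows $H_{e_{\ell^*}, 1} g = 0$ for each line. Since the $d$ line directions at any joint span $\FF^d$, every first Hasse derivative of $g$ vanishes; iterating with higher-order Hasse derivatives as needed (to handle positive characteristic), $g$ must be a constant polynomial. Combined with $g|_\ell \equiv 0$ for any $\ell \in \cL$, this yields $g = 0$, the desired contradiction. The overall strategy parallels the higher-order polynomial method developed by Zhang~\cite{Zhang} in the multijoints setting.
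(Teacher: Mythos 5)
Your proposal has a genuine gap, and it sits exactly where the real difficulty of the lemma lies. In the inductive step you differentiate once in the direction of a chosen line $\ell^*$ and need hypothesis (b) to survive with $n$ replaced by $n-1$; you yourself note that a naive Taylor bound drops $\sum_{p\in\ell}\beta'_{p,\ell}$ by $|\cJ\cap\ell\setminus\ell^*|$, which can be much more than $1$, and you only gesture at a fix (``the resolution I have in mind'') without proving it. The claimed boost --- that $g'$ vanishes to strictly higher order at joints off $\ell^*$ because $g$ vanishes identically on every line of $\cL$ --- is unsubstantiated and doubtful: vanishing of $g$ on a line is a one-dimensional condition and does not by itself increase the transverse box-vanishing orders at a joint, so nothing recovers the lost budget in (b). Relatedly, your sketch never actually uses hypothesis (a) or the $\alpha_p$'s to do any work (you only assert that (a) is ``preserved''), whereas some such compatibility condition must enter essentially; in the paper's argument it is precisely the quantity $\gamma_1+\cdots+\gamma_d-\alpha_p$, minimized over all non-vanishing box corners at all joints, that makes a single restriction-to-a-line step succeed without any induction on $n$ or any differentiation. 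There the chosen coefficient polynomial $g_{\gamma_2-1,\dots,\gamma_d-1}(x_1)$ is shown, via (a) and minimality, to vanish to order $\beta_{p',\ell_1}$ at every joint $p'$ of $\ell_1$, and (b) finishes immediately.

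Two further problems: your ``first observation'' that $g|_\ell\equiv 0$ is not valid as stated, because vanishing to order $\bm\beta$ is a box condition --- if $\beta_{p,\ell_i}=0$ in some other direction $i$ at a joint $p\in\ell$, the constraint at $p$ is vacuous and forces no vanishing of $g|_\ell$ at $p$, yet such zero values are explicitly allowed by the lemma (see the clause $\beta_{p',\ell}>0$ in (a)) and do occur in its applications. And the endgame in positive characteristic is also unfinished: vanishing of all first Hasse derivatives only places $g$ in $\FF[x_1^{\mathrm{char}\,\FF},\dots]$, and ``iterating with higher-order Hasse derivatives as needed'' would require redoing the entire vanishing-order bookkeeping for those derivatives, which your induction does not set up.
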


\begin{remark}
It is fine to replace (a) by the simpler hypothesis that $\beta_{p,\ell} - \alpha_p = \beta_{p',\ell} - \alpha_{p'}$ whenever $p,p'\in \ell$ for proving \cref{thm:main} in this section. This special case of \cref{lem:vanishing} is slightly easier to think about though the proof remains essentially the same. We will need the above stated formulation in \cref{sec:higher-dimension}.
\end{remark}

\begin{proof}
Let $g$ be a nonzero polynomial that vanishes to order $\bm \beta$ on $(\cJ, \cL)$. Among all choices of $p \in \cJ$ and $(\gamma_1, \dots, \gamma_d) \in \ZZ_{> 0}^d$ such that $g$ does not vanish to order $(\gamma_1, \dots, \gamma_d)$ at $p$ along $(\ell_1,\dots, \ell_d)$, the $d$ distinct lines of $\cL$ passing through $p$, pick one so that $\gamma_1 + \cdots + \gamma_d - \alpha_p$ is minimized. By re-ordering these $d$ lines if necessary, we have $\gamma_1 > \beta_{p,\ell_1}$.
By a change of coordinates, let us assume that $p$ is the origin and $\ell_1, \dots, \ell_d$ are the coordinate axes. Write
\[
g(x_1, \dots, x_d) = \sum_{(\omega_2, \dots, \omega_d) \in \ZZ_{\ge 0}^{d-1}} g_{\omega_2, \dots, \omega_d}(x_1) x_2^{\omega_2} \cdots x_d^{\omega_d}.
\]
The coefficient of $x_1^{\gamma_1-1} \cdots x_d^{\gamma_d-1}$ in $g$ is nonzero since $g$ does not vanish to order $(\gamma_1, \dots, \gamma_d)$ at the origin along coordinate directions and $\gamma_1 + \cdots + \gamma_d$ is minimized for this chosen $p$. Thus $g_{\gamma_2 - 1,\dots, \gamma_d-1}$ is nonzero.

Let $p' = (c, 0, \dots, 0) \in \cJ$ be a joint on $\ell_1$, and $\ell'_2, \dots, \ell'_d$ the other lines in $\cL$ passing through $p'$.
We claim that $g_{\gamma_2 - 1,\dots, \gamma_d-1}(x_1)$ vanishes to order at least $\beta_{p', \ell_1}$ at $x_1 = c$. 
Assume $\beta_{p',\ell_1} > 0$ or else the claim vacuously holds.
For every $(\gamma'_1,\gamma'_2, \dots, \gamma'_d) \in \ZZ_{\ge 0}^{d-1}$ with $\gamma'_1+\gamma'_2 + \cdots + \gamma'_d = \beta_{p',\ell_1}+\gamma_2 + \cdots + \gamma_d$, applying hypothesis (a), we have
\begin{align*}
\gamma'_1 + \gamma'_2 + \cdots + \gamma'_d - \alpha_{p'} 
&= \beta_{p',\ell_1} + \gamma_2 + \cdots + \gamma_d - \alpha_{p'}
\\
&\le \beta_{p,\ell_1} + \gamma_2 + \cdots + \gamma_d - \alpha_{p}
\\
&< \gamma_1 + \gamma_2 + \cdots + \gamma_d - \alpha_{p},
\end{align*}
and thus, by minimality, $g$ vanishes to order $(\gamma'_1, \gamma'_2, \dots, \gamma'_d)$ at $p'$ along $(\ell_1, \ell'_2, \dots, \ell'_d)$. 

The above paragraph shows that for every joint $p'$ on $\ell_1$ formed by $\ell_1,\ell'_2,\ldots,\ell'_d$, if $\omega_1+\cdots+\omega_d$ is at most $(\beta_{p',\ell_1}-1)+(\gamma_2-1)+\cdots+(\gamma_d-1)$, then
\[[x_1^{\omega_1}\cdots x_d^{\omega_d}]g(p'+x_1e_{\ell_1}+x_2e_{\ell'_2}+\cdots+x_de_{\ell'_d}) = 0.\]
Since $e_{\ell_1},e_{\ell'_2},\ldots,e_{\ell'_d}$ form a basis, the same statement also holds when $e_{\ell'_2},\ldots,e_{\ell'_d}$ are replaced with $e_{\ell_2},\ldots,e_{\ell_d}$. By taking $\omega_i=\gamma_i-1$ for $i=2,\ldots,d$ and noting that $p'=p+ce_{\ell_1}$, we see that $g_{\gamma_2 - 1,\dots, \gamma_d-1}(x_1)$ vanishes to order at least $\beta_{p', \ell_1}$ at $x_1 = c$ for every joint $p' = (c, 0, \dots, 0)$ on $\ell_1$. Hence
$\deg g \ge \deg g_{\gamma_2-1, \dots, \gamma_d-1} \ge \sum_{p':p'\in \ell_1} \beta_{p',\ell} \ge n$
by (b).
\end{proof}

\begin{lemma}\label{lem:dis-ineq}
Assuming the same setup as \cref{lem:vanishing}, one has
\begin{equation} \label{eq:dis-ineq}
\sum_{p \in \cJ }\prod_{\ell \ni p} \beta_{p,\ell} \geq \binom{n+d-1}{d}.
\end{equation}
\end{lemma}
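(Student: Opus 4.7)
The plan is a dimension-counting argument of the polynomial method type, using \cref{lem:vanishing} as a black box. First, I would recall that the space of polynomials in $\FF[x_1,\dots,x_d]$ of degree less than $n$ is spanned by monomials $x_1^{a_1}\cdots x_d^{a_d}$ with $a_1+\cdots+a_d \le n-1$, and hence has dimension
\[
\binom{n-1+d}{d} = \binom{n+d-1}{d}.
\]

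Next, I would observe that vanishing to order $\bm\beta$ on $(\cJ,\cL)$ is a system of linear constraints on the coefficients of $g$. Indeed, at a joint $p\in\cJ$ with chosen lines $\ell_1,\dots,\ell_d$, the condition that $g$ vanishes to order $(\beta_{p,\ell_1},\dots,\beta_{p,\ell_d})$ at $p$ along these directions asks that the coefficient of $x_1^{\omega_1}\cdots x_d^{\omega_d}$ in $g(p + x_1 e_{\ell_1} + \cdots + x_d e_{\ell_d})$ vanishes for each $(\omega_1,\dots,\omega_d)$ with $0\le \omega_i < \beta_{p,\ell_i}$. Since this substitution is an invertible linear change of variables, each of these $\prod_{\ell\ni p}\beta_{p,\ell}$ conditions is a linear constraint on the coefficients of $g$. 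Summing over all joints, vanishing to order $\bm\beta$ on $(\cJ,\cL)$ is therefore a linear system of at most $\sum_{p\in\cJ}\prod_{\ell\ni p}\beta_{p,\ell}$ equations in the coefficients of $g$.

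Finally, I would close the argument by contradiction. Suppose $\sum_{p\in\cJ}\prod_{\ell\ni p}\beta_{p,\ell} < \binom{n+d-1}{d}$. Then the above linear system has strictly fewer equations than the dimension of the space of polynomials of degree less than $n$, so there exists a nonzero polynomial $g$ of degree less than $n$ that vanishes to order $\bm\beta$ on $(\cJ,\cL)$. This contradicts \cref{lem:vanishing}, which forces any such nonzero polynomial to have degree at least $n$. Hence \cref{eq:dis-ineq} holds.

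There is no real obstacle here: the content of the lemma is already in \cref{lem:vanishing}, and the present statement is merely its dimension-counting consequence. The only small care to take is to verify that the vanishing-to-order conditions really do translate into $\prod_{\ell\ni p}\beta_{p,\ell}$ independent linear constraints at each $p$, which follows from the change of variables to the basis $(e_{\ell_1},\dots,e_{\ell_d})$ being invertible because the chosen directions at a joint are linearly independent by definition.
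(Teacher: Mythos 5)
Your proposal is correct and is essentially identical to the paper's own proof: both count the dimension $\binom{n+d-1}{d}$ of the space of polynomials of degree less than $n$, note that vanishing to order $\bm\beta$ imposes $\sum_{p}\prod_{\ell\ni p}\beta_{p,\ell}$ linear constraints, and derive a contradiction with \cref{lem:vanishing} if the inequality failed. Your extra remark about the invertible change of variables at each joint is a fine (and harmless) elaboration of why the constraints are linear.
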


\begin{proof}
The set of all polynomials in $d$ variables with degree less than $n$ is a vector space with dimension $\binom{n+d-1}{d}$. 
The constraint that a polynomial vanishes to order $\bm \beta = (\beta_{p,\ell})_{(p,\ell) : p \in \ell}$ on $(\cJ, \cL)$ is a set of $\sum_p \prod_{\ell \ni p} \beta_{p,\ell}$ linear constraints. Thus, if \cref{eq:dis-ineq} is violated, then there exists some nonzero polynomial of degree less than $n$ that vanishes to order $\bm \beta$ on this joints configuration, which contradicts \cref{lem:vanishing}.
\end{proof}

\begin{remark}
One can show that \cref{ex:tight} is the exact optimum under the additional hypothesis that every line in $\cL$ contains exactly the same number $n$ of joints. Indeed, setting $\beta_{p,\ell}=1$ for all $(p,\ell)$ and $\alpha_p=0$ for all joints $p$ in \cref{lem:dis-ineq} yields $|\cJ| \ge \binom{n+d-1}{d} = \binom{d |\cJ|/ |\cL| + d-1}{d}$; in particular, $ \binom{k}{d-1}$ lines form at most $\binom{k}{d}$ joints. This observation appears to not have been stated before, though it can be derived using earlier techniques as it only considers single order vanishing.
\end{remark}

Earlier proofs~\cite{GK10,KSS10,Qui09} of the joints theorem first reduce to the case where every line can be assumed to contain many joints and then apply parameter counting to deduce the existence of a polynomial that vanishes to single order at every joint.
Zhang~\cite{Zhang} considers higher order vanishings to prove the multijoints extension.
Our control of higher order vanishing in \cref{lem:vanishing} is executed differently from earlier proofs. 
Furthermore, we set up a variational problem, below, associated to the problem of choosing the orders of vanishing.

\begin{lemma}\label{lem:cont-ineq}
Let $(\cJ, \cL)$ be a joints configuration in $\FF^d$. 
Suppose $a_p \in \RR$ for each $p \in \cJ$ and $b_{p, \ell} \in \RR_{\ge 0}$ for each $(p,\ell)\in \cJ \times \cL$ with $p \in \ell$ satisfy
\begin{enumerate}
	\item[(a)] $b_{p,\ell} - a_p = b_{p',\ell} - a_{p'}$ whenever $p,p' \in \ell$ and $b_{p',\ell}>0$, and 
	\item[(b)] $\sum_{p : p \in \ell} b_{p,\ell} = 1$ for every $\ell \in \cL$.
\end{enumerate}
Then 
\[
\sum_{p\in \cJ} \prod_{\ell \ni p} b_{p,\ell} \geq \frac{1}{d!}.
\]
\end{lemma}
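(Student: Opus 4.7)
The plan is to reduce to the discrete inequality \cref{lem:dis-ineq} via a scaling argument. First, I would exploit the rigidity of hypothesis (a): for each $\ell \in \cL$, hypothesis (b) forces some $p^* \in \ell$ to have $b_{p^*,\ell} > 0$, so $c_\ell := b_{p^*,\ell} - a_{p^*}$ is well-defined, and (a) then gives $b_{p,\ell} = a_p + c_\ell$ for every $p \in \ell$ (the case $b_{p,\ell}=0$ just means $a_p = -c_\ell$). In particular $a_p + c_\ell \geq 0$ always, and summing hypothesis (b) yields $c_\ell = (1 - \sum_{p \in \ell} a_p)/d$.

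For each large integer $N$, I would set $\alpha_p := \floor{Na_p}$ and
\[
\beta_{p,\ell} := \max\paren{\floor{Na_p} + \floor{Nc_\ell},\; 0},
\]
and take $n := N - 2d$. Writing $\beta_{p,\ell}$ as ``$\alpha_p + \floor{Nc_\ell}$ clipped below at $0$'' makes hypothesis (a) of \cref{lem:vanishing} immediate: whenever $\beta_{p',\ell} > 0$, one has $\beta_{p',\ell} - \alpha_{p'} = \floor{Nc_\ell}$, while $\beta_{p,\ell} - \alpha_p \geq \floor{Nc_\ell}$ regardless of which branch of the max is active. Summing along a line, $\sum_{p \in \ell} \beta_{p,\ell} \geq N\bigl(dc_\ell + \sum_{p \in \ell} a_p\bigr) - 2d = N - 2d$, verifying hypothesis (b).

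Applying \cref{lem:dis-ineq} together with the upper bound $\beta_{p,\ell} \leq Na_p + Nc_\ell = Nb_{p,\ell}$ (valid even when the max is $0$, since $b_{p,\ell} \geq 0$) gives
\[
N^d \sum_{p \in \cJ} \prod_{\ell \ni p} b_{p,\ell} \;\geq\; \sum_{p \in \cJ} \prod_{\ell \ni p} \beta_{p,\ell} \;\geq\; \binom{N - d - 1}{d}.
\]
Dividing by $N^d$ and sending $N \to \infty$ (noting that $\binom{N-d-1}{d}/N^d \to 1/d!$) yields $\sum_{p \in \cJ} \prod_{\ell \ni p} b_{p,\ell} \geq 1/d!$.

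The main obstacle is choosing the discretization $\beta_{p,\ell}$ so that three competing requirements hold simultaneously: hypothesis (a), nonnegativity, and an upper bound by $Nb_{p,\ell}$. The naive choice $\floor{Nb_{p,\ell}}$ destroys (a) because $\floor{Na_p + Nc_\ell}$ need not split additively across a line. Splitting into the two separate floors $\floor{Na_p} + \floor{Nc_\ell}$ restores (a), and the outer max with $0$ restores nonnegativity without spoiling the upper bound, since the only pairs where the max is active have $a_p + c_\ell \geq 0$ anyway.
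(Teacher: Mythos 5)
Your argument is essentially the paper's proof of \cref{lem:cont-ineq}: solve hypothesis (a) as $b_{p,\ell}=a_p+c_\ell$, discretize to integer orders satisfying the hypotheses of \cref{lem:vanishing}, apply \cref{lem:dis-ineq}, and let the scaling parameter tend to infinity; your explicit clipped choice $\beta_{p,\ell}=\max(\floor{Na_p}+\floor{Nc_\ell},0)$ is in fact a careful implementation of the paper's ``$\beta_{p,\ell}=b_{p,\ell}n+O(1)$'', handling nonnegativity and hypothesis (a) cleanly. One slip: you implicitly assume each line contains exactly $d$ joints, but only the reverse is true (each joint lies on $d$ chosen lines, while a line $\ell$ may contain any number $N_\ell=\abs{\setcond{p\in\cJ}{p\in\ell}}$ of joints); so summing (b) along $\ell$ gives $c_\ell=(1-\sum_{p\in\ell}a_p)/N_\ell$ rather than dividing by $d$, and the floor losses give only $\sum_{p\in\ell}\beta_{p,\ell}\ge N-2N_\ell$, so the stated bound $N-2d$ (and hence $n:=N-2d$) can fail when some line carries more than $d$ joints. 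The fix is cosmetic: take $n:=N-2\max_{\ell\in\cL}N_\ell$, a constant depending only on the configuration (as the paper explicitly permits), and the conclusion is unchanged since $\binom{n+d-1}{d}/N^d\to 1/d!$ as $N\to\infty$.
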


\begin{proof}
Let $n$ be a sufficiently large positive integer (all the constants in this proof may depend on the joints configuration). Let $\alpha_p = \ceil{a_p n}$ for every $p \in \cJ$. 
Then we can find nonnegative integers $\beta_{p,\ell} = b_{p,\ell}n + O(1)$ so that $\beta_{p,\ell} - \alpha_p = \beta_{p',\ell} - \alpha_{p'}$ for every $p,p'\in\ell$ and $\ell\in\cL$, and furthermore $\sum_{p: p\in \ell} \beta_{p,\ell} \ge n$ for every $\ell \in \cL$. Thus by \cref{lem:dis-ineq}, we have
\[
\sum_p \prod_{\ell \ni p} (b_pn + O(1)) \ge \binom{n +d -1}{d}.
\]
Taking $n \to \infty$ yields the desired inequality.\end{proof}

We say that a joints configuration $(\cJ, \cL)$ is \emph{connected} if the graph constructed by taking $\cJ$ as vertices, with $p,p' \in \cJ$ adjacent if there is some $\ell \in \cL$ containing both $p$ and $p'$, is connected.

\begin{lemma}\label{lem:same-weight}
Let $(\cJ, \cL)$ be a connected joints configuration in $\FF^d$. 
Then there exist real numbers $a_p, b_{p,\ell}$ satisfying the hypothesis of \cref{lem:cont-ineq} with an additional constraint that $\prod_{\ell : \ell \ni p}b_{p,\ell}$ has the same value for all $p\in \cJ$. 
\end{lemma}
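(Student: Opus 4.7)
The plan is to reformulate the existence problem as a concave maximization on a compact polytope, and then use connectivity to pin down the maximum.

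First I parameterize: condition (a) forces $c_\ell := b_{p,\ell} - a_p$ to depend only on $\ell$, and (b) then gives $c_\ell = (1 - \sum_{p' \in \ell} a_{p'})/k_\ell$ with $k_\ell := |\{p \in \cJ : p \in \ell\}|$. So $b_{p,\ell}(\bm a) := a_p + c_\ell(\bm a)$ is an affine function of $\bm a = (a_p)_{p \in \cJ}$. The shift $\bm a \mapsto \bm a + t\bm 1$ leaves $\bm b$ invariant; I normalize by $\sum_p a_p = 0$. Let $\mathcal A_+ := \{\bm a : \sum a_p = 0,\ b_{p,\ell}(\bm a) \geq 0\}$. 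Connectivity makes $\mathcal A_+$ a compact convex polytope: any recession direction $\bm v$ would satisfy $k_\ell v_p \geq \sum_{p' \in \ell} v_{p'}$ at every incidence, whose total telescopes to zero, forcing equalities throughout and making $v_p$ equal the average of $(v_{p'})_{p' \in \ell}$ for each $\ell \ni p$; connectivity then forces $\bm v = 0$.

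Next I set up the variational problem. Define $\pi_p(\bm a) := \prod_{\ell \ni p} b_{p,\ell}(\bm a)$ and maximize $\Phi(\bm a) := \min_p \pi_p(\bm a)^{1/d}$ on $\mathcal A_+$. Each $\pi_p^{1/d}$ is the geometric mean of $d$ linear forms, hence concave; $\Phi$ is a pointwise minimum of concave functions and is itself concave. Since $\Phi(\bm 0) > 0$ while $\Phi$ vanishes on $\partial \mathcal A_+$ (any $b_{p,\ell} = 0$ makes the corresponding $\pi_p = 0$), the maximum is attained at some interior point $\bm a^*$. Let $S := \{p : \pi_p(\bm a^*)^{1/d} = \Phi(\bm a^*)\}$.

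The main technical step, and the main obstacle, is to show $S = \cJ$. By the concave KKT conditions there exist multipliers $\mu_p \geq 0$, not all zero, supported on $S$, with $\sum_{p \in S} \mu_p \nabla \pi_p^{1/d}(\bm a^*) = \lambda \bm 1$ (the $\lambda$ coming from the constraint $\sum a = 0$). Using $\nabla \pi_p^{1/d} = (\pi_p^{1/d}/d)\nabla \log \pi_p$ and that $\pi_p^{1/d}$ is the same for all $p \in S$, this becomes $\sum_{p \in S} \mu_p \nabla \log \pi_p \propto \bm 1$. A direct calculation from $b_{p,\ell} = a_p + c_\ell$ gives $\sum_q (\nabla \log \pi_p)_q = 0$, so summing over components forces the proportionality constant to vanish: $\sum_{p \in S} \mu_p \nabla \log \pi_p = 0$. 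For $p \neq q$ one computes $(\nabla \log \pi_p)_q = -\sum_{\ell : p, q \in \ell} 1/(k_\ell b_{p,\ell}) \leq 0$. Writing $S_+ := \{p : \mu_p > 0\}$, the $q$-component of the identity for any $q \notin S_+$ is a sum of nonpositive terms (over $p \in S_+$) equal to zero, so each vanishes: no line in $\cL$ contains both a joint in $S_+$ and a joint outside $S_+$. By the connectivity hypothesis this forces $S_+ = \cJ$, so $S = \cJ$ and $\pi_p(\bm a^*)$ is constant in $p$.
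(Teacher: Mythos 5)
Your proof is correct, but it runs along a genuinely different track than the paper's. The paper works directly with the set $B$ of feasible weight vectors $\bm b$ (feasibility including the existence of some $(a_p)$), takes by compactness a minimizer of $\max_{p}\prod_{\ell\ni p}b_{p,\ell}$ with the fewest ``maximizing'' joints, and then rules out a non-constant optimum by an explicit hand-made perturbation: lower $a_p$ at the maximizing joints and redistribute weight along any line meeting both types of joints, which strictly lowers the product at some maximizing joint by connectivity. You instead restrict to the parameterization $b_{p,\ell}=a_p+c_\ell(\bm a)$ (note this imposes equality in hypothesis (a) of \cref{lem:cont-ineq} for \emph{all} $p,p'\in\ell$, not just when $b_{p',\ell}>0$; that is a stronger condition, which is harmless since the lemma only asks for existence of one such weight system), prove compactness of the domain via the recession-cone computation, and maximize the concave function $\min_p\prod_{\ell\ni p}b_{p,\ell}(\bm a)^{1/d}$ rather than minimizing the max. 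Equality of the products then falls out of first-order optimality: the superdifferential of a min of concave functions, the row-sum identity $\sum_q(\nabla\log\pi_p)_q=0$ killing the multiplier of the normalization constraint, and the sign condition $(\nabla\log\pi_p)_q\le 0$ for $p\ne q$, which together with connectivity forces the active set to be all of $\cJ$. I checked the computations ($c_\ell=(1-\sum_{p'\in\ell}a_{p'})/k_\ell$, the telescoping in the recession argument, the gradient formulas, and the interiority of the maximizer, which guarantees $b_{p,\ell}>0$ and hence differentiability) and they are sound. What your route buys: it replaces the somewhat delicate perturbation and the secondary tie-breaking (``fewest maximizing joints'') with off-the-shelf convex optimality conditions, and the equal-product conclusion appears as a clean complementary-slackness statement. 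What the paper's route buys: it is elementary and self-contained (no subdifferential calculus), it works with the relaxed inequality form of condition (a) as stated, and the same perturbation template is reused essentially verbatim for \cref{lem:same-sum} in \cref{sec:higher-dimension}, where the objective is a sum rather than a product and where one additionally needs to pass to subconfigurations $(\cJ',\cL')$ — an extra wrinkle your KKT setup would have to be adapted to handle.
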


\begin{proof}
Let $B \subseteq [0,1]^{d|\cJ|}$ denote the set of feasible $\bm b = (b_{p,\ell}) \in [0,1]^{d|\cJ|}$ each satisfying the constraints of \cref{lem:cont-ineq} for some $(a_p)_{p\in \cJ}$. The set $B$ is closed since the existence of $(a_p) \in \RR^{|\cJ|}$ satisfying (a) is equivalent to a finite set of linear constraints on $\bm b$. Also $B$ is nonempty since setting all $b_{p,\ell} = |\ell \cap \cJ|^{-1}$ and $a_{p,\ell} = 0$ is feasible. 

By compactness, the quantity 
\begin{align}\label{eq:max-weight}
\max_{p\in \cJ}\prod_{\ell \ni p}b_{p,\ell}
\end{align}
is minimized by some $\bm b \in B$. For any $\bm b\in B$, call $p \in \cJ$ a \emph{maximizing joint} if it attains the maximum in \cref{eq:max-weight}. 

We claim that if some $\bm b$ minimizes the quantity in \cref{eq:max-weight}, then every joint is a maximizing joint, thus showing that $\prod_{\ell : \ell \ni p}b_{p,\ell}$ is the same for all $p \in \cJ$. 
Suppose for the sake of contradiction that this is not the case.
Among all possible global minimizers $\bm b$ for \cref{eq:max-weight}, choose one where the number of maximizing joints is minimum. Let $(a_p) \in \RR^{|\cJ|}$ be the parameters in \cref{lem:cont-ineq} associated to this $\bm b$.
Let us decrease $a_p$ at every maximizing joint $p$ by some sufficiently small $\epsilon > 0$. 
For every $\ell \in \cL$, if it contains all maximizing or all non-maximizing joints, then we do not need to change the values of $b_{p,\ell}$. 
For $\ell\in\cL$ that contains both maximizing and non-maximizing joints, we can strictly decrease $b_{p,\ell}$ at each maximizing $p \in \ell$ and increase each $b_{p,\ell}$ at each non-maximizing $p \in \ell$ so that the hypotheses (a) and (b) of \cref{lem:cont-ineq} remain satisfied for $\ell$. 
To see that this is always possible for sufficiently small $\epsilon$, we need to show that $b_{p,\ell}\neq 0$ when $p$ is maximizing and $b_{p,\ell}\neq 1$ when $p$ is non-maximizing. 
If $b_{p,\ell}=0$ for some maximizing joint $p$ on the line $\ell$, then for any joint $p'$ the product $\prod_{\ell'\ni p'}b_{p',\ell'}$ has to be zero, implying that all joints are maximizing, which is a contradiction. 
In addition, if $b_{p,\ell}=1$ for some non-maximizing joint $p$ on the line $\ell$, then by assumption there is another maximizing joint $p'$ on the same line with $b_{p',\ell}=0$ by the hypothesis (b) of \cref{lem:cont-ineq}, which is also a contradiction.

After the modification described in the previous pargraph, the product $\prod_{\ell : \ell \ni p} b_{p,\ell}$ never increases at maximizing joints $p$, and is in fact strictly lowered whenever $p$ shares a line with some non-maximizing joint. 
Due to the connectivity of the joints configuration and the assumption that not all joints are maximizing, the value of $\prod_{\ell : \ell \ni p} b_{p,\ell}$ must be strictly lowered at some maximizing joint. 
On one hand, if $\prod_{\ell:\ell\ni p}b_{p,\ell}$ stays the same for some maximizing joint $p$, then this contradicts the minimality of number of maximizing joints. 
On the other hand, if $\prod_{\ell:\ell\ni p}b_{p,\ell}$ decreases for all maximizing joints $p$, then \cref{eq:max-weight} also decreases, contradicting the assumption that $\bm b$ is a minimizer for \cref{eq:max-weight}.
Therefore, all joints are maximizing.
\end{proof}

\begin{proof}[Proof of \cref{thm:main}]
Let $(\cJ, \cL)$ be a joints configuration with $|\cJ| = J$ and $|\cL| = L$.
First suppose that the joints configuration is connected. Choose $a_p, b_{p,\ell}$ as in \cref{lem:same-weight}. Let $W$ denote the common value of  $\prod_{\ell : \ell \ni p}b_{p,\ell}$. By the AM-GM inequality followed by hypothesis (b) in \cref{lem:cont-ineq},
\[
d J W^{1/d}
= \sum_p d  \biggl( \prod_{\ell \ni p} b_{p,\ell} \biggr)^{1/d}
\le \sum_{p} \sum_{\ell \ni p} b_{p,\ell}
= \sum_\ell  \sum_{p \in \ell} b_{p,\ell}
= L,
\]
and thus
\[
W \le \frac{L^d}{d^dJ^d}.
\]
Thus, by \cref{lem:cont-ineq},
\[
\frac{1}{d!}\leq \sum_p\prod_{\ell \ni p}b_{p,\ell } 
= JW 
\leq \frac{L^d}{d^d J^{d-1}}.
\]
Thus $J \le C_d L^{d/(d-1)}$ with $C_d = (d-1)!^{1/(d-1)}/d$.

Finally, decompose $(\cJ, \cL)$ into connected components $(\cJ_1, \cL_1), \dots, (\cJ_k, \cL_k)$, i.e., connected components of the associated graph in the definition of connectivity, and apply the above result individually to each component to obtain
\[
|\cJ| = \sum_{i=1}^{k} |\cJ_i|
\leq C_d  \sum_{i=1}^{k} |\cL_i|^{d/(d-1)} \leq C_d |\cL|^{d/(d-1)}. \qedhere
\]
\end{proof}

\section{Multijoints}\label{sec:multijoint}

We say that $(\cJ, \cL_1, \dots, \cL_d)$ is a \emph{multijoints configuration} in $\FF^d$ if each $\cL_i$ is a set of lines and $\cJ$ is a set of points each being the intersection of exactly one line from each $\cL_i$, not all lying on the same hyperplane. 
The points in $\cJ$ are called the \emph{multijoints}.

The following generalization of the joints theorem was conjectured by Carbery and proved in $\FF^3$ and $\RR^n$ by Iliopoulou~\cite{Ili15a,Ili15b} and in general $\FF^n$ by Zhang~\cite{Zhang}.
Zhang also proves a further generalization that counts with multiplicities when a joint is  contained in many lines, although we do not discuss it here.

\begin{theorem}[Multijoints~\cite{Ili15a,Ili15b,Zhang}] \label{thm:zhang-multijoint}
	For every $d$ there is some constant $C^{\mathrm{mult}}_d$ so every multijoints configuration $(\cJ, \cL_1, \dots, \cL_d)$ in $\FF^d$ satisfies $|\cJ| \le C^{\mathrm{mult}}_d (\abs{\mathcal L_1}\cdots \abs{\mathcal L_d})^{1/(d-1)}$.
\end{theorem}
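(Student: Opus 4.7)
The plan is to adapt the polynomial-method framework from \cref{sec:proof} to the multijoints setting by replacing ``the $d$ chosen lines through $p$'' with the canonically indexed tuple $(\ell_1(p), \dots, \ell_d(p))$, where $\ell_i(p) \in \cL_i$ is the unique line of color $i$ through the joint $p$. With this indexing, the definitions of order of vanishing and the statements of \cref{lem:vanishing,lem:dis-ineq,lem:cont-ineq,lem:same-weight} transfer essentially verbatim, their proofs requiring only cosmetic changes since each joint still has exactly $d$ lines through it in linearly independent directions. In each connected component $c$ of the configuration, this produces nonnegative weights $b_{p, \ell_i(p)}$ with $\sum_{p \in \ell} b_{p, \ell} = 1$ for every $\ell$, satisfying $\sum_p \prod_{i=1}^d b_{p, \ell_i(p)} \ge 1/d!$, and with $\prod_i b_{p, \ell_i(p)} = W$ constant across all joints $p$ in the component.

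The key new ingredient is to use \emph{weighted} AM-GM in the final step. The unweighted AM-GM as in the proof of \cref{thm:main} would give only a bound in terms of $\sum_i |\cL_i|$, which is strictly weaker than the desired $(\prod_i |\cL_i|)^{1/(d-1)}$. Instead, in a connected component with $J$ joints and $L_i$ lines of color $i$, I would apply weighted AM-GM with weights $\mu_i = 1/L_i$:
\[
W^{1/d}\paren{\prod_{i=1}^d \mu_i}^{1/d} \le \frac{1}{d}\sum_{i=1}^d \mu_i\, b_{p, \ell_i(p)}.
\]
Summing over $p$ in the component and using $\sum_p b_{p, \ell_i(p)} = L_i$ for each $i$ yields $J\, W^{1/d} \le (\prod_i L_i)^{1/d}$, which combined with $JW \ge 1/d!$ gives $J^{d-1} \le d!\,\prod_i L_i$ within the component.

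To aggregate across connected components, I would sum the per-component bounds via the Loomis--Whitney-type inequality
\[
\sum_c \paren{\prod_{i=1}^d y_{c, i}}^{1/(d-1)} \le \paren{\prod_{i=1}^d \sum_c y_{c, i}}^{1/(d-1)},
\]
which follows from H\"older's inequality with mixed exponents $(d-1, \dots, d-1, \infty)$ after normalizing the marginals. This yields $|\cJ|^{d-1} \le d!\,\prod_i |\cL_i|$, so that one may take $C^{\mathrm{mult}}_d = (d!)^{1/(d-1)}$. The main obstacle I expect is recognizing the correct weights $\mu_i = 1/L_i$ in the weighted AM-GM step: only this specific choice converts the geometric-mean control on $\prod b$ into a bound matching the product $\prod_i L_i$, while any uniform or misweighted alternative collapses to the substantially weaker sum bound.
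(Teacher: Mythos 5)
Your proposal is correct and takes essentially the same route as the paper's own argument (its proof of \cref{thm:multijoints-improvement}, which establishes \cref{thm:zhang-multijoint} with $C^{\mathrm{mult}}_d = d!^{1/(d-1)}$): the paper likewise reuses \cref{lem:cont-ineq,lem:same-weight} unchanged and applies AM-GM to the quantities $b_{p,i}/L_i$, which is precisely your weighted AM-GM with weights $\mu_i = 1/L_i$. Your H\"older-type aggregation over connected components is a valid way of making explicit the reduction to the connected case that the paper only states in passing.
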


\begin{remark}
    \cref{thm:zhang-multijoint} implies \cref{thm:joints}, and furthermore they are equivalent if all $|\cL_i|$ are within a constant factor of each other. However, the  constant factors are lossy under the reductions.
\end{remark}

Zhang's proof gives \cref{thm:zhang-multijoint} with $C^{\mathrm{mult}}_d = d^{d/(d-1)}$. We improve this constant factor using our method, though our bound is also likely not optimal.

\begin{theorem} \label{thm:multijoints-improvement}
	\cref{thm:zhang-multijoint} holds with $C^{\mathrm{mult}}_d = d!^{1/(d-1)}$.
\end{theorem}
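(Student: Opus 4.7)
The plan is to apply the variational framework of \cref{sec:proof} to the underlying joints configuration $(\cJ, \cL_1 \cup \cdots \cup \cL_d)$---the non-coplanarity condition in the definition of a multijoints configuration forces the $d$ lines through each joint to be linearly independent, so this is a genuine joints configuration and the results of \cref{sec:proof} apply verbatim---and to replace the concluding AM-GM step in the proof of \cref{thm:main} with H\"older's inequality, which is adapted to the partition of $\cL$ into $d$ color classes.

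Suppose first that the configuration is connected. \cref{lem:same-weight,lem:cont-ineq} supply feasible $(a_p, b_{p,\ell})$ with $\prod_{\ell \ni p} b_{p,\ell}$ equal to a common value $W$ satisfying $JW \ge 1/d!$, where $J = |\cJ|$. Since each joint $p$ meets exactly one line $\ell_i(p) \in \cL_i$ for each color $i \in [d]$, we may write $W = \prod_{i=1}^{d} b_{p, \ell_i(p)}$, and H\"older's inequality gives
\[
J W^{1/d} \;=\; \sum_{p \in \cJ} \Bigl( \prod_{i=1}^{d} b_{p, \ell_i(p)} \Bigr)^{1/d} \;\le\; \prod_{i=1}^{d} \Bigl( \sum_{p \in \cJ} b_{p, \ell_i(p)} \Bigr)^{1/d} \;=\; \prod_{i=1}^{d} |\cL_i|^{1/d},
\]
where the last equality rewrites $\sum_{p \in \cJ} b_{p,\ell_i(p)} = \sum_{\ell \in \cL_i}\sum_{p \in \ell} b_{p,\ell}$ and invokes hypothesis (b) of \cref{lem:cont-ineq}. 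Eliminating $W$ between $JW \ge 1/d!$ and $W \le (\prod_i |\cL_i|)/J^d$ produces $J^{d-1} \le d! \prod_i |\cL_i|$, which is the desired bound with $C^{\mathrm{mult}}_d = d!^{1/(d-1)}$.

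For the general case I decompose into connected components $(\cJ_k, \cL_{1,k}, \dots, \cL_{d,k})$ and sum; this is legitimate provided $f(x_1,\dots,x_d) = (x_1\cdots x_d)^{1/(d-1)}$ is superadditive on $\RR_{\ge 0}^d$. This superadditivity follows from Mahler's inequality $(\prod_i(x_i + y_i))^{1/d} \ge (\prod_i x_i)^{1/d} + (\prod_i y_i)^{1/d}$ (which after normalizing $x_i + y_i = 1$ reduces to an AM-GM on each factor) combined with the elementary estimate $c^{1/(d-1)} \le c^{1/d}$ for $c \in [0,1]$.

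The conceptual obstacle, and the source of the improvement over Zhang's constant $d^{d/(d-1)}$, lies in this last step: the AM-GM step used in \cref{sec:proof} would only bound $dJW^{1/d}$ by $\sum_i |\cL_i|$, and converting this back to $\prod_i |\cL_i|$ via AM-GM goes in the wrong direction. H\"older's inequality applied to the $d$-color decomposition extracts the geometric mean $\prod_i |\cL_i|^{1/d}$ directly, which is precisely what is needed; the rest of the argument is simply a recombination of lemmas already established in \cref{sec:proof}.
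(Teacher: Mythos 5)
Your proposal is correct and follows essentially the same route as the paper: reduce to a connected component, use \cref{lem:same-weight} and \cref{lem:cont-ineq} to get a common value $W$ with $JW \ge 1/d!$, and bound $JW^{1/d} \le (|\cL_1|\cdots|\cL_d|)^{1/d}$ --- your H\"older step is exactly the paper's pointwise AM-GM applied to the weighted quantities $b_{p,i}/L_i$ and summed over $p$, so the two computations coincide. The only real addition is your explicit verification, via Mahler's inequality, that $(L_1,\dots,L_d)\mapsto(L_1\cdots L_d)^{1/(d-1)}$ is superadditive, which the paper leaves implicit when it reduces to connected configurations ``as before.''
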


\begin{proof}
The proof is the same as that of \cref{thm:main} except for the final calculation. As before, we can reduce to case of a connected configuration. Choose $a_p, b_{p,\ell}$ as in \cref{lem:same-weight}. Write $L_i = |\cL_i|$, and $b_{p,i} = b_{p,\ell}$ where $\ell$ is the line in $\cL_i$ that contains $p$. Let $W$ denote the common value of  $\prod_{i=1}^d b_{p,i}$. 
By the AM-GM inequality,
\[
\frac{d J W^{1/d}}{\left(L_1 \cdots L_d\right)^{1/d}}
= \sum_p d  \biggl( \prod_{i=1}^d \frac{b_{p,i }}{L_i} \biggr)^{1/d}
\le \sum_{p} \sum_{i=1}^d \frac{b_{p,i}}{L_i} = \sum_{i=1}^d \frac{1}{L_i} \sum_{\ell \in \cL_i} \sum_{p \in \ell} b_{p,i} 
= d.
\]
So 
\[
W\leq \frac{|L_1|\cdots|L_d|}{|J|^d},
\]
and hence by \cref{lem:cont-ineq},
\[
\frac{1}{d!} 
\le \sum_p \prod_{i=1}^d b_{p,i}
= J W
\leq \frac{|L_1|\cdots|L_d|}{|J|^{d-1}}.
\]
Hence
\[
J \le \paren{ d! |L_1|\cdots|L_d|}^{1/(d-1)}. \qedhere 
\]
\end{proof}

We propose a conjecture for the optimal configuration for multijoints.
We say that a joints or multijoints configuration is \emph{generically induced} if all the lines come from taking $(d-1)$-wise intersections of generic hyperplanes and all the joints come from taking $d$-wise intersections of the same hyperplanes (though we do not have to include all possible intersections). Here, a set of hyperlanes is generic if any $d$-wise intersection of the hyperplanes is a point, and any $(d+1)$-wise intersection is empty. In other words, we consider a subcollection of the construction in \cref{ex:tight}. We conjecture that the optimal configurations for multijoints are generically induced.

A generically induced multijoints configuration in $\FF^3$ can be encoded by an edge-colored graph, with each vertex corresponding to one of the generic planes.
We color the edges of a complete graph using three colors, where every edge is allowed to receive any number (including zero) of colors. 
For each $i=1,2,3$, assign to an edge $uv$ color $i$ if $\cL_i$ contains the line formed by the intersections of the two hyperplanes corresponding to $u$ and $v$. 
Then $|\cL_i|$ is the number of edges with color $i$. The number of multijoints is the number of rainbow triangles, i.e., triangles formed by taking one edge of each color (each triangle is counted at most once even if multiple rainbow color combinations are available). The process is reversible: starting with such an edge-colored graph, we can recover a generically induced multijoints configuration.

Thus, when restricted to generically induced multijoints configurations in $\FF^3$, the multijoints problem is purely graph theoretic: given $L_1, L_2, L_3$, what is the maximum number of rainbow triangles if we color $L_1$ edges red, $L_2$ edges blue, and $L_3$ edges yellow (again, every edge is allowed to receive any number of colors, including none)?

Here is an example.
Consider the edge-coloring of the clique $K_4$ by decomposing its edge-set into three matchings and assigning a different color to each matching. By blowing up each vertex of $K_4$ into $k$ vertices, we obtain an edge-coloring of a complete 4-partite graph with $L = 2k^2$ edges of each color and $J = 4k^3$ rainbow triangles. We believe that this construction gives the optimal constant factor for the extremal graph theoretic problem stated in the previous paragraph, i.e., there are always at most $\sqrt{2L_1L_2L_3}$ rainbow triangles (we thank Bernard Lidicky for running a preliminary flag algebra computation for us in the case of equal number of edges of each color). A more difficult conjecture is that this geometric configuration is also optimal for multijoints.

\begin{conjecture}
    \cref{thm:zhang-multijoint} for $d = 3$ holds with $C^{\mathrm{mult}}_3 = \sqrt{2}$.
\end{conjecture}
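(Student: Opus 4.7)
The plan is to leverage the variational polynomial-method framework of \cref{lem:cont-ineq,lem:same-weight,thm:multijoints-improvement} and close the gap between the proved constant $\sqrt{6}$ and the conjectured constant $\sqrt{2}$ through a combinatorial-geometric refinement. I begin by locating the slack in the proof of \cref{thm:multijoints-improvement}. After reducing to a connected configuration and choosing $(a_p, b_{p,\ell})$ via \cref{lem:same-weight}, the AM--GM step is already tight on the conjectured extremizer---the $k$-fold blow-up of the matching decomposition of $K_4$, in which every line contains $2k$ joints and every joint--line pair satisfies $b_{p,\ell} = 1/(2k)$. All the loss comes from \cref{lem:cont-ineq}, which provides only $\sum_p \prod_{\ell \ni p} b_{p,\ell} \ge 1/d! = 1/6$, whereas the extremizer actually satisfies $\sum_p \prod_{\ell \ni p} b_{p,\ell} = 4k^3/(2k)^3 = 1/2$. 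The conjecture thus reduces to improving the vanishing-order functional by a factor of $3$ using extra structure specific to $d=3$ multijoints.

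One direct attempt is to strengthen \cref{lem:dis-ineq} by working with a better-adapted test space of polynomials. For multijoints, the three line classes $\cL_1, \cL_2, \cL_3$ naturally suggest using a multidegree-bounded polynomial space, whose dimension $n_1 n_2 n_3$ is a factor of $6$ larger than the total-degree bounded space's dimension $\binom{n+2}{3}$ when $n_i = n$. If one can prove an analog of \cref{lem:vanishing} lower-bounding the multidegree of $g$ in terms of vanishing along each color class separately, then the dimension gain could plausibly translate into the desired factor of $3$ improvement via a color-sensitive AM--GM. The main obstacle is that the line directions in different color classes are generically not aligned with any global coordinate system, so one needs either an intrinsic notion of multidegree adapted to arbitrary line directions, or a local-to-global argument synthesizing per-joint analyses into a global degree bound.

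An alternative route, sketched in the paper, is to prove that every near-extremal multijoints configuration in $\FF^3$ is approximately generically induced, reducing the conjecture to the graph-theoretic statement that any $3$-edge-colored multigraph with $L_i$ edges of color $i$ has at most $\sqrt{2}(L_1 L_2 L_3)^{1/2}$ rainbow triangles. The combinatorial inequality should be accessible by a flag algebra computation with three independent density parameters, extending Lidicky's preliminary calculation. The hardest step, however, is the geometric rigidity required to reduce to the combinatorial problem: the variational polynomial method is inherently aggregate and gives no direct access to the underlying hyperplane structure. I expect this step to require a genuinely new ingredient---possibly a secondary polynomial witnessing deviation from hyperplane structure, in the spirit of the Guth--Katz ruled surface analysis, or a stability version of \cref{lem:same-weight} tracking the fine structure of joints attaining the maximum product weight.
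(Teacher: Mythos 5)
There is nothing in the paper to compare against here: the statement you were given is a \emph{conjecture}, which the paper explicitly leaves open --- it records only the extremal construction (the $k$-fold blow-up of the matching-decomposed $K_4$, giving $L_i = 2k^2$ and $J = 4k^3$, hence the constant $\sqrt{2}$), conjectures that optimal multijoints configurations are generically induced, and remarks that even the resulting purely graph-theoretic rainbow-triangle problem is not settled there (only suggested as plausibly amenable to flag algebras). Your submission, correspondingly, is not a proof but a research program, and you are candid about this. Your diagnosis of where the current method loses is accurate and consistent with the paper: in the proof of \cref{thm:multijoints-improvement} the AM--GM step is tight on the conjectured extremizer (where every line carries $2k$ joints and $b_{p,\ell}=1/(2k)$), and the entire factor-of-$3$ gap between $\sqrt{6}$ and $\sqrt{2}$ sits in \cref{lem:cont-ineq}, whose bound $\sum_p \prod_{\ell\ni p} b_{p,\ell} \ge 1/6$ is far from the value $1/2$ attained by the extremizer.

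The genuine gap is that neither of your two routes is carried out, and in each case the missing step is exactly the hard one. For the multidegree route, the dimension gain of the box $n_1n_2n_3$ over $\binom{n+2}{3}$ is not by itself a factor-of-$3$ improvement in \cref{lem:dis-ineq}: one must also show that the vanishing constraints can be counted in a correspondingly color-sensitive way, and more fundamentally one needs an analogue of \cref{lem:vanishing} bounding a ``multidegree'' with respect to three families of pairwise non-aligned directions; the induction in \cref{lem:vanishing} proceeds one line (one direction) at a time and is blind to colors, so no such statement follows from the paper's machinery, and you do not supply one. For the reduction route, both halves are open: the rigidity/stability statement that near-extremal configurations are (approximately) generically induced is itself only conjectured in the paper, and the combinatorial inequality that $3$-colored graphs with $L_i$ edges of color $i$ have at most $\sqrt{2}(L_1L_2L_3)^{1/2}$ rainbow triangles is likewise unproven (a flag-algebra computation is proposed, not performed). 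So, as written, your text identifies the obstruction correctly but establishes no new bound below $\sqrt{6}$; the conjecture remains untouched by it.
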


In higher dimensions, generically induced configurations correspond to rainbow simplices in edge-colored hypergraphs, and it seems an interesting combinatorial problem to determine the optimal constant even in this special case.

The joints problem with one set of lines (i.e., the setting of \cref{thm:joints,thm:main}) can also be considered in this light.
For a generically induced joints configuration in $\FF^d$, the joints problem is equivalent to determining the maximum number of simplices in a $(d-1)$-uniform hypergraph with a given number $L$ of edges, and this problem is completely understood due to the classic Kruskal--Katona theorem.
The Kruskal--Katona theorem also suggests the answer for the exact maximum number of joints when the number $L$ of lines is not a number of the form $\binom{k}{d-1}$.
Curiously, our proof of \cref{thm:main} seems to give a new proof an asymptotic version of this special case of Kruskal--Katona theorem (i.e., up to $1+o(1)$ factor) via the polynomial method, namely that a $(d-1)$-uniform hypergraph with $L$ edges has at most $C_d L^{d/(d-1)}$ simplices, where $C_d = (d-1)!^{1/(d-1)}/d$.

\section{Joints of flats}\label{sec:higher-dimension}

Consider the following higher dimensional generalization of the joints problem. A \emph{$k$-flat} is a $k$-dimensional affine subsapce.
Given a collection of 2-flats in $\RR^6$, we say that a point is a \emph{joint} if it is lies on a triple of the given 2-flats not all contained in some hyperplane. What is the maximum number of joints formed by $N$ 2-flats?

A construction analogous to \cref{ex:tight} works for the higher dimensional setting. Alternatively, by considering a joints configuration in $\CC^3$ and then viewing the complex lines them as real 2-flats in $\RR^6$, one obtains $\Theta(N^{3/2})$ joints. It remains a very interesting open problem to prove an $O(N^{3/2})$ upper bound.

The best known upper bound on the number of joints of 2-flats in $\RR^6$ is $N^{3/2+o(1)}$ due to Yang~\cite{Yang}, who proved the result using a polynomial partitioning technique (and hence his method only works in $\RR$ and not in arbitrary fields). Yang also proves a similar claim about higher dimensional varieties of bounded degree instead of 2-flats, as well as a ``multijoints'' generalization. 

Following Yang's result, we consider the following generalization of the joints problem. Let $d_1, \dots, d_r$ and $m_1, \dots, m_r$ be positive integers. 
We say that $(\cJ, \cF_1, \dots, \cF_k)$ is a \emph{$(d_1, \dots, d_r; m_1, \dots, m_r)$-joints configuration}  
if each $\cF_i$ is a set of $d_i$-flats in $\FF^d$, where $d = d_1m_1 + \cdots + d_rm_r$, 
and $\cJ$ is a set of joints each being the intersection of $m_i$ elements from each $\cF_i$ for $i \in [r]$, not all lying in some hyperplane.

For example, the joints configuration of \cref{thm:joints} corresponds to the parameter $(1;d)$. The multijoints configuration of \cref{thm:zhang-multijoint} corresponds to $(1, \dots, 1;1,\dots,1)$. The joints problem of 2-flats in $\RR^6$ corresponds to $(2;3)$.

Yang~\cite{Yang} proved the bound $|\cJ| \le (|\cF_1|^{m_1} \cdots |\cF_r|^{m_r})^{1/(m_1 + \cdots + m_r-1) +  o(1)}$ when $\FF = \RR$. It is natural to conjecture the following bound.

\begin{conjecture} \label{conj:higher}
    Every $(d_1, \dots, d_r; m_1, \dots, m_r)$-joints configuration $(\cJ, \cF_1, \dots, \cF_k)$ in $\FF^d$ satisfies
	\[
	|\cJ| \le C_{d_1, \dots, d_r; m_1, \dots, m_r} (|\cF_1|^{m_1} \cdots |\cF_r|^{m_r})^{1/(m_1 + \cdots + m_r-1)}.
	\]
	for some constant $C_{d_1, \dots, d_r; m_1, \dots, m_r}$.
\end{conjecture}

(Note: The above conjecture is proved by Tidor, Yu, and Zhao in a subsequent work~\cite{TYZ}.)

Furthermore, we conjecture that the optimal constant comes from generically induced configurations formed by placing generic hyperplanes so that the flats are the intersections of an appropriate number of these hyperplanes. As in \cref{sec:multijoint}, this special case can be encoded as a face-colored simplicial complex (non-uniform hypergraph), where joints correspond to simplices that are colored in a certain way. 

As in the remark after \cref{thm:zhang-multijoint}, \cref{conj:higher} for $(d_1, d_2, \dots, d_r; m_1, m_2, \dots, m_r)$ is implied by the case $(d_1, \dots, d_1, d_2, \dots, d_2, \dots, d_r, \dots, d_r;1,\dots,1)$ where each $d_i$ is repeated $m_i$ times. However, we state it in the above form since it suggests a hierarchy of difficulties for the conjecture, and also since the optimal constants $C_{\bm d}$ are not preserved under the reduction.

Here we record a proof of a special case of  \cref{conj:higher} (with a likely non-optimal constant) by a variation of our techniques.

\begin{theorem}\label{thm:higher}
    Every $(1, d-m;m,1)$-joints configuration $(\cJ, \cL, \cF)$ in $\FF^d$ satisfies $|\cJ| \le \binom{d}{m}^{1/m} |\cL| |\cF|^{1/m}$	
\end{theorem}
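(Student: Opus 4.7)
The plan is to adapt the machinery of \cref{sec:proof} to accommodate flats. At each joint $p\in\cJ$ with lines $\ell_1,\dots,\ell_m\in\cL$ and flat $F\in\cF$ through it, I assign vanishing orders $\beta_{p,\ell_i}\in\ZZ_{\ge 0}$ for each line and $\beta_{p,F}\in\ZZ_{\ge 0}$ for the flat. In coordinates $(x_1,\dots,x_m,y_1,\dots,y_{d-m})$ at $p$ where each $x_i$ runs along $\ell_i$ and $y_1,\dots,y_{d-m}$ span $F$, I declare that $g$ vanishes to these orders at $p$ if its Taylor coefficient at $p$ of $x_1^{\omega_1}\cdots x_m^{\omega_m}y^\nu$ vanishes whenever $\omega_i<\beta_{p,\ell_i}$ for every $i$ and $|\nu|<\beta_{p,F}$. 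This imposes $\prod_i\beta_{p,\ell_i}\cdot\binom{\beta_{p,F}+d-m-1}{d-m}$ linear constraints at each joint.

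The first task is to prove an analog of \cref{lem:vanishing}: under condition (a) of \cref{lem:vanishing} for lines, an analogous condition for flats, and the cumulative conditions $\sum_{p\in\ell}\beta_{p,\ell}\ge n$ for every $\ell\in\cL$ and $\sum_{p\in F}\binom{\beta_{p,F}+d-m-1}{d-m}\ge\binom{n+d-m-1}{d-m}$ for every $F\in\cF$, any nonzero polynomial vanishing to these orders has degree at least $n$. The proof mirrors the minimality argument in \cref{lem:vanishing}, but now needs an additional case when the offending direction at the chosen joint lies along the flat: extracting the appropriate coefficient of $x^\omega$ yields a nonzero polynomial in the $d-m$ flat variables that vanishes at every joint on $F$ with the prescribed orders, and a dimension-counting argument on $F$ forces its degree to exceed $\deg g$, giving a contradiction. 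The ``$\ge$'' form of condition (a) is essential here, since the minimality argument in this flat case only produces inequalities between the orders at different joints. Analogs of \cref{lem:dis-ineq,lem:cont-ineq,lem:same-weight} then go through, yielding a same-weight solution $\bm b$ with $\sum_{p\in\ell}b_{p,\ell}=1$ for every $\ell\in\cL$, $\sum_{p\in F}b_{p,F}^{d-m}=1$ for every $F\in\cF$, and common value $W=\prod_i b_{p,\ell_i}\cdot b_{p,F}^{d-m}$ satisfying $JW\ge 1/(\binom{d}{m}m!)$.

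For the final calculation, I apply weighted AM-GM with equal weights $1/(m+1)$ at each joint to the $m+1$ quantities $b_{p,\ell_1}/(|\cL|/m),\dots,b_{p,\ell_m}/(|\cL|/m),b_{p,F}^{d-m}/|\cF|$, sum over joints, and use the normalizations $\sum_p\sum_i b_{p,\ell_i}=|\cL|$ and $\sum_p b_{p,F}^{d-m}=|\cF|$; this gives $W\le(|\cL|/m)^m|\cF|/J^{m+1}$. Combining with $JW\ge 1/(\binom{d}{m}m!)$ yields
\[
J^m \le \binom{d}{m}\cdot\frac{m!}{m^m}\cdot|\cL|^m|\cF| \le \binom{d}{m}|\cL|^m|\cF|,
\]
using $m!/m^m\le 1$, which rearranges to the claimed bound. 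The main obstacle is executing the first task: rigorously extending \cref{lem:vanishing} to the flat setting, especially the new case requiring a multivariate dimension-counting argument on $F$ and the corresponding cumulative condition.
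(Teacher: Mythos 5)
There is a genuine gap, and it sits exactly where you flagged ``the main obstacle'': the vanishing lemma you propose is false. Your plan imposes only \emph{pointwise} vanishing at the joints (a box in the line directions times a simplex of size $\beta_{p,F}$ in the flat directions) and asks the cumulative condition $\sum_{p\in F}\binom{\beta_{p,F}+d-m-1}{d-m}\ge\binom{n+d-m-1}{d-m}$ per flat to force $\deg g\ge n$. In one variable, total vanishing order bounds the degree from below, which is what drives \cref{lem:vanishing}; in $d-m\ge 2$ variables it does not, and no ``dimension-counting argument on $F$'' can force a degree lower bound on a specific polynomial --- parameter counting only ever yields \emph{existence} of polynomials with prescribed vanishing, never nonexistence of low-degree ones. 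Concrete counterexample (take $d=3$, $m=1$, $\FF=\RR$): let $F$ be the plane $z=0$, let the joints be $N$ points of $F$ all lying on the line $y=0$ inside $F$, and through each joint put the vertical line $\ell_i$ parallel to the $z$-axis. Set $\beta_{p,\ell}=n$ (each line has one joint, so your line condition holds), $\beta_{p,F}=1$, $\alpha_p=0$; your flat-cumulative condition reads $N\ge\binom{n+1}{2}$, which holds for $N$ large. The polynomial $g=y$ has degree $1$, yet it satisfies every one of your vanishing conditions: at each joint the required coefficients are those of pure powers of the line variable, and $g$ restricted to each vertical line is identically zero. So the conclusion $\deg g\ge n$ fails badly, and the same construction (joints confined to a hyperplane inside $F$) kills the lemma for all $m$ with $d-m\ge 2$. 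The ``line case'' of your minimality argument also does not transfer to the flat case for a structural reason: the extracted object is a $(d-m)$-variate polynomial on $F$, and the joints on $F$ carry no residual line structure inside $F$ that could substitute for the univariate order-counting step.

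The paper's proof avoids this entirely by making the flat-constraints global rather than pointwise: it requires $g$ to vanish to transverse order $\gamma_f$ on \emph{all} of $f$ (\cref{lem:higher-vanishing}), which costs $\binom{n+d-m-1}{d-m}\binom{\gamma_f+m-1}{m}$ conditions per flat in \cref{lem:higher-dis-ineq} --- this larger price is precisely why the theorem's bound is $\binom{d}{m}^{1/m}|\cL||\cF|^{1/m}$ and not something stronger like the bound your numerology would give. With whole-flat vanishing, the degree bound needs no new argument: one adds $d-m$ auxiliary lines inside $f$ through each joint with order $n$, observes that condition (c) makes $g$ vanish to the required multi-order at each joint of the augmented configuration, and quotes \cref{lem:vanishing} directly. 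Separately, your claim that ``analogs of \cref{lem:dis-ineq,lem:cont-ineq,lem:same-weight} then go through'' is also not routine: in this setting the paper's variational step (\cref{lem:same-sum}) equalizes the \emph{sums} $\sum_{\ell\ni p}b_{p,\ell}$ rather than the products, and has to pass to a subconfiguration $(\cJ',\cL')$ maximizing $|\cJ'|/|\cL'|$ because equalization can genuinely fail on the full configuration. But the decisive issue is the false vanishing lemma; without whole-flat vanishing (or some other mechanism preventing the joints of a flat from hiding inside a low-degree hypersurface of that flat), the approach cannot be repaired as sketched.
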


Throughout, $\cL$ is a set of lines and $\cF$ is a set of $(d-m)$-flats in $\FF^d$. 
We similarly abuse the notation as in \cref{sec:proof} when we say that a joint is ``contained'' in a line or a flat .

For a polynomial $g\in \FF[x_1,\ldots,x_d]$ and a $(d-m)$-flat $f$, we say that $g$ \emph{vanishes to order $\gamma$ on $f$} if, after an affine transformation taking $f$ to $x_1 = \cdots = x_m = 0$, the coefficient of $x_1^{\omega_1}\cdots x_d^{\omega_d}$ in $g$ is zero whenever $\omega_1+\cdots+\omega_m < \gamma$. Moreover, we say that $g$ \emph{vanishes to order $\bm\gamma=(\gamma_f)_{f\in \cF}\in \ZZ_{> 0}^{|\cF|}$ on $\cF$} if $g$ vanishes to order $\gamma_f$ on $f$ for all $f\in \cF$.

\begin{lemma}\label{lem:higher-vanishing}
Let $(\cJ, \cL, \cF)$ be a $(1,d-m;m,1)$-joints configuration in $\FF^d$. Let $n$ be a nonnegative integer. Let $\alpha_p \in \ZZ$ for each $p \in \cJ$, $\beta_{p, \ell} \in \ZZ_{\ge 0}$ for each $(p,\ell)\in \cJ \times \cL$ with $p \in \ell$, and $\gamma_{f}\in \ZZ_{\ge 0}$ for each $f\in \cF$ satisfying
\begin{enumerate}
	\item[(a)] $\beta_{p,\ell} - \alpha_p \ge \beta_{p',\ell} - \alpha_{p'}$ whenenver $p,p' \in \ell$ and $\beta_{p',\ell} > 0$,
	\item[(b)] $\sum_{p : p \in \ell} \beta_{p,\ell} \ge n$ for every $\ell \in \cL$, and
	\item[(c)] $\sum_{\ell: \ell \ni p}\beta_{p,\ell}\leq \gamma_f$ for every $p\in \cJ, f\in \cF$ such that $p\in f$.
\end{enumerate}
Then every nonzero polynomial that vanishes to order $\bm \gamma = (\gamma_f)_{f\in \cF}$ on $\cF$ has degree at least $n$.
\end{lemma}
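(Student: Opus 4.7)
The plan is to adapt the variational selection argument from the proof of \cref{lem:vanishing}. At each joint $p \in \cJ$, fix coordinates so that the $m$ lines $\ell_1, \ldots, \ell_m$ of $\cL$ through $p$ are the $x_1, \ldots, x_m$-axes and the flat $f_p \in \cF$ through $p$ is $\{x_1 = \cdots = x_m = 0\}$. Writing $g = \sum c^{(p)}_\omega x_1^{\omega_1} \cdots x_d^{\omega_d}$ in these coordinates, the flat-vanishing hypothesis forces $c^{(p)}_\omega = 0$ whenever $\omega_1 + \cdots + \omega_m < \gamma_{f_p}$. Among all $(p, \omega) \in \cJ \times \ZZ_{\ge 0}^d$ with $c^{(p)}_\omega \neq 0$, I select one minimizing $\omega_1 + \cdots + \omega_d - \alpha_p$---crucially summing over all $d$ indices rather than just the $m$ line indices.

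For the selected $(p,\omega)$, the nonvanishing of $c^{(p)}_\omega$ together with flat vanishing at $p$ and hypothesis (c) gives $\omega_1 + \cdots + \omega_m \ge \gamma_{f_p} \ge \sum_{i=1}^{m} \beta_{p, \ell_i}$, so by pigeonhole at least one $i \in [m]$ has $\omega_i \ge \beta_{p, \ell_i}$; after relabeling the lines, assume $\omega_1 \ge \beta_{p, \ell_1}$. Writing $g(x_1, \ldots, x_d) = \sum_{(\mu_2, \ldots, \mu_d)} g_{\mu_2, \ldots, \mu_d}(x_1)\, x_2^{\mu_2} \cdots x_d^{\mu_d}$ as in the proof of \cref{lem:vanishing}, the polynomial $g_{\omega_2, \ldots, \omega_d}(x_1)$ is nonzero (its coefficient of $x_1^{\omega_1}$ equals $c^{(p)}_\omega$) and has degree at most $\deg g$. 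I claim that $g_{\omega_2, \ldots, \omega_d}(x_1)$ vanishes to order at least $\beta_{p', \ell_1}$ at $x_1 = c$ for every joint $p' = (c, 0, \ldots, 0) \in \cJ$ on $\ell_1$; granting the claim, hypothesis (b) gives $\deg g \ge \deg g_{\omega_2, \ldots, \omega_d} \ge \sum_{p' \in \ell_1} \beta_{p', \ell_1} \ge n$.

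To prove the claim (assuming $\beta_{p', \ell_1} > 0$, otherwise it is trivial), I compare the ``shifted $p$-coordinates at $p'$'' (same basis vectors as at $p$ but with origin $p'$) with the actual $p'$-coordinates. The key geometric observation is that $\ell_1$ is the common first axis of both systems, so the change-of-basis matrix has first column $(1, 0, \ldots, 0)^T$; hence the transformation sends $y_1 := x_1 - c$ to itself and mixes only the last $d-1$ variables via some invertible linear map in $x_2, \ldots, x_d$. Expanding $g$ at $p'$ in its own coordinates $(y_1, \ldots, y_d)$ and matching coefficients, the coefficient of $y_1^{\tau_1} x_2^{\omega_2} \cdots x_d^{\omega_d}$ in the shifted-$p$ expansion of $g$ becomes a linear combination of $c^{(p')}_\nu$ ranging over $\nu$ with $\nu_1 = \tau_1$ and $\sum_{i \ge 2} \nu_i = \sum_{i \ge 2} \omega_i$. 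Minimality at $p'$ forces $c^{(p')}_\nu = 0$ whenever $\nu_1 + \cdots + \nu_d - \alpha_{p'} < \omega_1 + \cdots + \omega_d - \alpha_p$; for the $\nu$ just described this reduces to $\tau_1 < \omega_1 + \alpha_{p'} - \alpha_p$, and hypothesis (a) along $\ell_1$ combined with $\omega_1 \ge \beta_{p, \ell_1}$ gives $\omega_1 + \alpha_{p'} - \alpha_p \ge \beta_{p', \ell_1}$. So all relevant $c^{(p')}_\nu$ vanish for $\tau_1 < \beta_{p', \ell_1}$, establishing the claim.

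The main subtlety I foresee is choosing the right quantity to minimize. Summing $\omega_i$ over all $d$ indices (rather than only the $m$ line indices) is essential: extracting the coefficient of $x_2^{\omega_2} \cdots x_d^{\omega_d}$ couples $g_{\omega_2, \ldots, \omega_d}$ with $\nu$'s of the same \emph{total} degree as $(\tau_1, \omega_2, \ldots, \omega_d)$ (not merely the same line-degree), and only the full-degree version of minimality leaves enough slack to absorb the potentially large flat-direction exponents $\omega_{m+1}, \ldots, \omega_d$ of the selected $\omega$. Interestingly, the flat-vanishing hypothesis on $g$ enters only at $p$ (to justify the reordering via (c)) and is not needed at $p'$.
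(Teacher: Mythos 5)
Your route is genuinely different from the paper's. The paper proves this lemma by a two-line reduction to \cref{lem:vanishing}: for each joint $p$ and its flat $f$, it adjoins $d-m$ auxiliary lines through $p$ inside $f$ in independent directions, assigns them $\beta_{p,\ell}=n$, and notes that hypothesis (c) makes vanishing to order $\bm\gamma$ on $\cF$ imply vanishing to order the resulting $\bm\beta'$ on the augmented joints configuration, so \cref{lem:vanishing} applies as a black box. You instead re-run the argument of \cref{lem:vanishing} from scratch, with a pigeonhole step (flat-vanishing at $p$ plus (c) gives $\omega_1+\cdots+\omega_m\ge\gamma_{f_p}\ge\sum_i\beta_{p,\ell_i}$, hence some $\omega_i\ge\beta_{p,\ell_i}$) replacing the role of the auxiliary lines. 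This direct adaptation is viable and makes visible exactly where (c) and the flat-vanishing enter; the paper's reduction is shorter and avoids redoing the coordinate-change bookkeeping.

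There is, however, one incorrect justification in your proof of the claim. The change-of-basis matrix between the shifted-$p$ frame and the $p'$ frame has first \emph{column} $(1,0,\dots,0)^T$, since both frames use the direction of $\ell_1$ as first basis vector; but this does not imply that the substitution sends $y_1$ to itself and mixes only the last $d-1$ variables. That would require the first \emph{row} to be $(1,0,\dots,0)$ as well, i.e., the other lines through $p'$ and the flat $f_{p'}$ would all have to have directions with no component along $\ell_1$, which need not hold. With only the first column pinned down, one gets $z_1=y_1+(\text{linear form in }y_2,\dots,y_d)$ and $z_i$ depending only on $y_2,\dots,y_d$ for $i\ge 2$, so the coefficient of $y_1^{\tau_1}x_2^{\omega_2}\cdots x_d^{\omega_d}$ is a linear combination of $c^{(p')}_\nu$ over all $\nu$ with $\nu_1\ge\tau_1$ and total degree $\nu_1+\cdots+\nu_d=\tau_1+\omega_2+\cdots+\omega_d$, not only those with $\nu_1=\tau_1$. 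Fortunately this error is harmless: your minimality criterion depends only on the total degree, and every $\nu$ in the correct (larger) index set has the same total degree $\tau_1+\omega_2+\cdots+\omega_d$, so the same computation ($\tau_1<\beta_{p',\ell_1}\le\beta_{p,\ell_1}+\alpha_{p'}-\alpha_p\le\omega_1+\alpha_{p'}-\alpha_p$, using (a) and $\omega_1\ge\beta_{p,\ell_1}$) kills all of them, and the claim, hence the lemma, still follows. This is precisely why minimizing the full sum $\omega_1+\cdots+\omega_d-\alpha_p$ is the right choice, as you observed; just replace the ``fixes $y_1$'' assertion by the total-degree bookkeeping above.
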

\begin{proof}
Let $\cL'$ be the union of $\cL$ along with, for each pair $p \in f$, $d-m$ new lines in $f$ all passing through $p$ in linearly independent directions. 
Then $(\cJ, \cL')$ is a joints configuration, where each new line contains exactly one joint. 
Set $\beta_{p,\ell} = n$ for these new lines. 
Then $(\alpha_p)_{p\in \cJ}$ and $\bm \beta' = (\beta_{p,\ell})_{(p,\ell) \in \cJ \times \cL':p\in \ell}$ satisfy the setup of \cref{lem:vanishing}. For every nonzero polynomial $g$ that vanishes to order $\bm \gamma$ on $\cF$, consider any joint $p$. By taking a linear transformation, without loss of generality assume that $p$ is a joint formed by the $x_i$-axes $(i=1,\ldots,d)$, denoted as $\ell_i$, where $\ell_i$ $(i=m+1,\ldots,d)$ are the lines added in $f$ for $p$. Then
\[[x_1^{\omega_1}\cdots x_d^{\omega_d}]g(x_1,\ldots,x_d)=0\]
whenever $\omega_1+\cdots+\omega_m<\gamma_f$. In particular, this holds whenever $\omega_i<\beta_{p,\ell_i}$ for $i=1,\ldots,d$ by assumption (c). Therefore, $g$ also vanishes to order $\bm\beta'$ on $(\cJ, \cL')$ by assumption (c).
By \cref{lem:vanishing}, the degree of $g$ is at least $n$.
\end{proof}

\begin{lemma}\label{lem:higher-dis-ineq}
Assuming the same setup as \cref{lem:higher-vanishing}, one has
\begin{equation} \label{eq:higher-dis-ineq}
\binom{n+d-m-1}{d-m}\sum_{f\in \cF} \binom{\gamma_f+m-1}{m} \geq \binom{n+d-1}{d}.
\end{equation}
\end{lemma}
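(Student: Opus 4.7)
The plan is to proceed by dimension counting, exactly as in the proof of \cref{lem:dis-ineq}. The space of polynomials in $\FF[x_1,\dots,x_d]$ of degree less than $n$ is a vector space of dimension $\binom{n+d-1}{d}$, and the constraint that a polynomial vanishes to order $\bm\gamma$ on $\cF$ is a collection of linear constraints on the coefficients. If the number of constraints is strictly less than $\binom{n+d-1}{d}$, then a nonzero polynomial of degree less than $n$ vanishing to order $\bm\gamma$ on $\cF$ exists, contradicting \cref{lem:higher-vanishing}. So it suffices to upper bound the total number of linear constraints by $\binom{n+d-m-1}{d-m}\sum_{f\in \cF}\binom{\gamma_f+m-1}{m}$.

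Fix a flat $f\in\cF$. After an affine change of coordinates sending $f$ to $\{x_1=\cdots=x_m=0\}$, the condition that a polynomial $g=\sum g_{\omega_1,\dots,\omega_d}x_1^{\omega_1}\cdots x_d^{\omega_d}$ vanishes to order $\gamma_f$ on $f$ is that $g_{\omega_1,\dots,\omega_d}=0$ whenever $\omega_1+\cdots+\omega_m<\gamma_f$. Within the space of polynomials of degree less than $n$, this imposes one constraint for each tuple $(\omega_1,\dots,\omega_d)\in\ZZ_{\ge 0}^d$ with $\omega_1+\cdots+\omega_d<n$ and $\omega_1+\cdots+\omega_m<\gamma_f$. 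Grouping by $s=\omega_1+\cdots+\omega_m$, the number of such tuples equals
\[
\sum_{s=0}^{\gamma_f-1}\binom{s+m-1}{m-1}\binom{n-s+d-m-1}{d-m}.
\]

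I would then bound this sum crudely by replacing $n-s$ with $n$ in the second binomial coefficient (valid since $s\ge 0$) and applying the hockey-stick identity $\sum_{s=0}^{\gamma_f-1}\binom{s+m-1}{m-1}=\binom{\gamma_f+m-1}{m}$, giving
\[
\binom{n+d-m-1}{d-m}\binom{\gamma_f+m-1}{m}
\]
as an upper bound on the number of constraints coming from the flat $f$. Summing over $f\in\cF$ and comparing with $\dim_\FF\{g:\deg g<n\}=\binom{n+d-1}{d}$ then yields \cref{eq:higher-dis-ineq}.

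There is no real obstacle; the only step that requires care is the bookkeeping of monomials satisfying both $\omega_1+\cdots+\omega_d<n$ and $\omega_1+\cdots+\omega_m<\gamma_f$, and the slight slack in the bound $n-s\le n$ is what allows the answer to factor cleanly into $\binom{n+d-m-1}{d-m}\binom{\gamma_f+m-1}{m}$ rather than a more complicated convolution. This looseness is presumably responsible for the non-optimal constant in \cref{thm:higher}.
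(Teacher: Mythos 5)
Your proof is correct and follows the same dimension-counting route as the paper, which simply asserts that vanishing to order $\bm\gamma$ on $\cF$ imposes at most $\binom{n+d-m-1}{d-m}\sum_{f\in\cF}\binom{\gamma_f+m-1}{m}$ linear constraints and then invokes \cref{lem:higher-vanishing}; your explicit monomial bookkeeping and hockey-stick bound just make that count precise.
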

\begin{proof}
The vector space of polynomials in $d$ variables with degree less than $n$ has dimension $\binom{n+d-1}{d}$. The constraint that a polynomial vanishes to order $\bm \gamma$ on $\cF$ is a set of at most $\binom{n+d-m-1}{d-m}\sum_{f\in \cF} \binom{\gamma_f+m-1}{m}$ linear constraints. Therefore, if \cref{eq:higher-dis-ineq} does not hold, then there exists a nonzero polynomial that vanishes to order $\bm\gamma$ but has degree less than $n$, contradicting \cref{lem:higher-vanishing}.
\end{proof}

\begin{lemma}\label{lem:higher-cont-ineq}
Let $(\cJ, \cL, \cF)$ be a $(1,d-m;m,1)$-joints configuration in $\FF^d$. Let $a_p \in \RR$ for each $p \in \cJ$, $b_{p, \ell} \in \RR_{\ge 0}$ for each $(p,\ell)\in \cJ \times \cL$ with $p \in \ell$, and $c_{f}\in \RR_{\ge 0}$ for each $f\in \cF$ satisfying
\begin{enumerate}
	\item[(a)] $b_{p,\ell} - a_p \ge b_{p',\ell} - a_{p'}$ whenever $p,p' \in \ell$ and $b_{p',\ell} > 0$,
	\item[(b)] $\sum_{p : p \in \ell} b_{p,\ell} = 1$ for every $\ell \in \cL$, and
	\item[(c)] $\sum_{\ell: \ell \ni p}b_{p,\ell}\leq c_f$ for every $p\in \cJ, f\in \cF$ such that $p\in f$.
\end{enumerate}
Then
\[\sum_{f\in \cF}c_f^{m}\geq \binom{d}{m}^{-1}.\]
\end{lemma}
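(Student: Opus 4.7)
The plan is to discretize and invoke \cref{lem:higher-dis-ineq}, in direct analogy with how \cref{lem:cont-ineq} is derived from \cref{lem:dis-ineq}. Let $n$ be a large positive integer (all implicit constants below may depend on the joints configuration). I set $\alpha_p = \lceil a_p n \rceil$ for each $p \in \cJ$, and apply the same discretization argument as in \cref{lem:cont-ineq} (using hypothesis (a) of the present lemma) to choose integers $\beta_{p,\ell} = b_{p,\ell} n + O(1)$ satisfying hypotheses (a) and (b) of \cref{lem:higher-vanishing}. For each flat $f \in \cF$, I set $\gamma_f = \lceil c_f n \rceil + K$ for a sufficiently large constant $K$ chosen after the configuration is fixed; this makes hypothesis (c) $\sum_{\ell \ni p}\beta_{p,\ell} \le \gamma_f$ hold whenever $p \in f$, using that $\sum_{\ell \ni p} b_{p,\ell} \le c_f$ and that each joint lies on boundedly many lines, so the accumulated rounding errors are uniformly bounded.

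Once all three hypotheses of \cref{lem:higher-vanishing} are verified, \cref{lem:higher-dis-ineq} delivers
\[
\binom{n+d-m-1}{d-m}\sum_{f\in \cF} \binom{\gamma_f+m-1}{m} \geq \binom{n+d-1}{d}.
\]
I then divide both sides by $n^d$ and let $n \to \infty$. Using the asymptotics $\binom{n+d-m-1}{d-m} = \frac{n^{d-m}}{(d-m)!} + O(n^{d-m-1})$ and $\binom{\gamma_f+m-1}{m} = \frac{(c_f n)^m}{m!} + O(n^{m-1})$ (uniformly in $f$, with the latter being $O(1)$ when $c_f = 0$), together with $\binom{n+d-1}{d} = \frac{n^d}{d!} + O(n^{d-1})$, the inequality becomes in the limit
\[
\frac{1}{(d-m)!\,m!}\sum_{f\in \cF} c_f^m \ge \frac{1}{d!},
\]
which rearranges to $\sum_{f\in \cF} c_f^m \ge \binom{d}{m}^{-1}$, as desired.

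The only step that requires any real care is verifying hypothesis (c) of \cref{lem:higher-vanishing} after discretization, which is what drives the definition of $\gamma_f$. Since the additive constant $K$ depends only on the configuration (and in particular, not on $n$), we have $K/n \to 0$, so this perturbation disappears after dividing by $n^d$. The boundary behavior around $b_{p,\ell} = 0$ is handled exactly as in the proof of \cref{lem:cont-ineq}: at such a pair $(p,\ell)$ one sets $\beta_{p,\ell} = 0$ and the inequality in hypothesis (a) is preserved thanks to the inequality form of (a) in the continuous statement. I do not anticipate any conceptual obstacle beyond this bookkeeping.
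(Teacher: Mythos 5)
Your proposal is correct and follows essentially the same route as the paper: discretize with $\alpha_p = \lceil a_p n\rceil$, $\beta_{p,\ell} = b_{p,\ell}n + O(1)$, $\gamma_f = c_f n + O(1)$, apply \cref{lem:higher-dis-ineq}, and let $n \to \infty$ to recover $\frac{1}{(d-m)!\,m!}\sum_f c_f^m \ge \frac{1}{d!}$. Your extra care with hypothesis (c) (padding $\gamma_f$ by a constant, using that each joint lies on only $m$ lines) and with the boundary case $b_{p,\ell}=0$ is exactly the bookkeeping the paper leaves implicit.
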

\begin{proof}
Let $n$ be a positive integer. Let $\alpha_p=\lceil a_pn\rceil$ for every $p\in \cJ$. By the assumptions we can take $\beta_{p,\ell}=b_{p,\ell}n+O(1)$ and $\gamma_f = c_f+O(1)$ so that the setup of \cref{lem:higher-dis-ineq} is met. By \cref{lem:higher-dis-ineq},
\[\binom{n+d-m-1}{d-m}\sum_{f\in \cF}\binom{c_fn+O(1)}{m}\geq \binom{n+d-1}{d},\]
and by taking $n$ to infinity, we obtain
\[\frac{1}{(d-m)!}\sum_{f\in \cF}\frac{c_f^{m}}{m!}\geq\frac{1}{d!},\]
which is the claimed inequality.
\end{proof}
\begin{lemma}\label{lem:same-sum}
For every $(1,d-m;m,1)$-joints configuration $(\cJ, \cL, \cF)$, there exist nonempty subsets $\cJ'\subseteq \cJ$ and  $\cL'\subseteq \cL$ such that $(\cJ', \cL', \cF)$ is also a $(1,d-m;m,1)$-joints configuration with $(a_p)_{p\in \cJ'}$,  $(b_{p,\ell})_{(p,\ell) \in \cJ' \times \cL' :p\in \ell}$, $(c_f)_{f\in \cF}$ that satisfy the setup of \cref{lem:higher-cont-ineq} for $(\cJ', \cL', \cF)$, and furthermore,
\begin{enumerate}
    \item[(a)] $|\cJ'|/ |\cL'| \ge |\cJ|/|\cL|$, and
    \item[(b)] there is some $s$ such that $\sum_{\ell:\ell\ni p}b_{p,\ell} = s$ and $c_f = s$ for every $p\in \cJ'$ and $f\in \cF'$.
\end{enumerate}
\end{lemma}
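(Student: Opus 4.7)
The plan is to adapt the variational argument of \cref{lem:same-weight}, replacing its product objective with a sum objective, together with an induction on $|\cJ|$.

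Let $B$ denote the compact set of $(\bm a, \bm b)$ satisfying hypotheses (a) and (b) of \cref{lem:higher-cont-ineq} for $(\cJ, \cL)$, and minimize $s := \max_{p \in \cJ} \sum_{\ell \ni p} b_{p,\ell}$ over $B$. By compactness, the minimum value $s^*$ is attained. Among all minimizers, I would pick one with smallest cardinality $|\cJ^*|$ of the maximizer set $\cJ^* := \{p \in \cJ : \sum_{\ell \ni p} b_{p,\ell} = s^*\}$. The main claim, in parallel with \cref{lem:same-weight}, is that at this optimum no line in $\cL$ contains both a joint in $\cJ^*$ and a joint outside $\cJ^*$. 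I would prove it by the same type of perturbation as in \cref{lem:same-weight}: decrease $a_p$ uniformly by a small $\epsilon > 0$ at every $p \in \cJ^*$, and for each ``mixed'' line $\ell$ (containing both maximizing and non-maximizing joints) shift $b_{p,\ell}$ downward at its maximizing joints and upward at its non-maximizing joints, by amounts chosen to preserve (a) and (b). This strictly decreases $\sum_\ell b_{p,\ell}$ at some maximizing joint, contradicting the minimality of $s^*$ (if every maximizer lies on a mixed line) or of $|\cJ^*|$ (otherwise). I expect this perturbation step to be the main obstacle: verifying it for the inequality form of (a) in \cref{lem:higher-cont-ineq} requires case analysis involving joints with $b_{p,\ell} = 0$, mimicking but generalizing the analogous step in \cref{lem:same-weight}.

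Granted the claim, $\cJ = \cJ^* \sqcup \cJ^{**}$ induces a line-partition $\cL = \cL^* \sqcup \cL^{**}$ where $\cL^*$ is the set of lines through $\cJ^*$, and $(\cJ^{**}, \cL^{**}, \cF)$ is a valid $(1^m, d-m)$-sub-configuration with $\cF$ unchanged. Double-counting $\sum_p T_p = |\cL|$ (where $T_p = \sum_{\ell \ni p} b_{p,\ell}$) yields $|\cL^*| = s^* |\cJ^*|$, and since $s^* \ge |\cL|/|\cJ|$ (the maximum dominates the average), a short manipulation gives $|\cJ^{**}|/|\cL^{**}| \ge |\cJ|/|\cL|$. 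If $\cJ^{**} = \emptyset$ then every $T_p$ equals $s^*$, so I take $\cJ' = \cJ$, $\cL' = \cL$, $s = s^*$, and $c_f = s$ for all $f \in \cF$, making condition (c) hold with equality; otherwise, since $|\cJ^{**}| < |\cJ|$, I apply the induction hypothesis to $(\cJ^{**}, \cL^{**}, \cF)$ to obtain the desired $(\cJ', \cL', \cF)$, and chain the density inequalities to conclude.
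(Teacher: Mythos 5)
Your overall architecture is the same as the paper's: minimize $\max_{p}\sum_{\ell\ni p}b_{p,\ell}$ over the compact feasible set, break ties by minimizing the number of maximizing joints, perturb via the $a_p$'s to get structural information about mixed lines, and then do a double count comparing the densities $|\cJ^*|/|\cL^*|$ and $|\cJ^{**}|/|\cL^{**}|$. Your induction on $|\cJ|$ plays exactly the role of the paper's opening step of passing to a sub-configuration maximizing $|\cJ'|/|\cL'|$ (the paper then derives a contradiction with that maximality instead of recursing); this difference is cosmetic. Your counting is also fine: $|\cL^*|=s^*|\cJ^*|$ together with $s^*\ge |\cL|/|\cJ|$ does give $|\cJ^{**}|/|\cL^{**}|\ge|\cJ|/|\cL|$.

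The genuine gap is your ``main claim'' that at the optimum no line contains both a maximizing and a non-maximizing joint. That statement is too strong, and the perturbation cannot prove it: if a maximizing joint $p$ lies on a mixed line $\ell$ but with $b_{p,\ell}=0$, then lowering $a_p$ forces no change on $\ell$ (hypothesis (a) of \cref{lem:higher-cont-ineq} only constrains pairs with positive $b$), and nothing in the move decreases $\sum_{\ell'\ni p}b_{p,\ell'}$, so no contradiction with the minimality of $s^*$ or of $|\cJ^*|$ arises. Such optima really occur: e.g.\ if every other line through $p$ contains $p$ as its only joint, then $b_{p,\ell'}=1$ is forced on those lines, $p$ can be maximizing with $b_{p,\ell}=0$ on a line $\ell$ it shares with a non-maximizing joint, and no feasible perturbation removes this mixed line. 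What the perturbation does prove—and what the paper's proof establishes—is the weaker claim that on any line containing a non-maximizing joint, every maximizing joint carries weight $b_{p,\ell}=0$. With only this weaker claim, your sentence ``$\cJ=\cJ^*\sqcup\cJ^{**}$ induces a line-partition'' fails as stated: you must instead define $\cL^{**}$ as the set of lines containing at least one non-maximizing joint (such lines may still pass through $\cJ^*$) and $\cL^*=\cL\setminus\cL^{**}$, noting that $(\cJ^{**},\cL^{**},\cF)$ is still a valid configuration and that the identity $s^*|\cJ^*|=|\cL^*|$ survives because maximizing joints contribute zero weight to lines of $\cL^{**}$ while lines of $\cL^*$ meet only maximizing joints. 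Once the claim is weakened and the partition redefined this way, your induction and density chaining go through and the argument coincides with the paper's.
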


\begin{proof}
Let us replace $\cJ$ and $\cL$ by their nonempty subsets $\cJ'$ and $\cL'$ such that $(\cJ', \cL', \cF)$ is a $(1, d-m;m,1)$-joints configuration with $|\cJ'|/|\cL'|$ maximized. It remains to show that (b) can be satisfied on this new $(\cJ, \cL, \cF)$.

Let $B\subseteq [0,1]^{m|\cJ|}$ be the set of feasible $\bm b=(b_{p,\ell})\in [0,1]^{m|\cJ|}$ satisfying the constraints of \cref{lem:higher-cont-ineq}. Then by the compactness and non-emptiness of $B$ again, the quantity
\begin{equation}\label{eq:max-sum}
\max_{p\in \cJ}\sum_{\ell \ni p}b_{p,\ell}
\end{equation}
is minimized by some $\bm b\in B$. We still call $p\in \cJ$ a \emph{maximizing joint} if it attains the maximum in \cref{eq:max-sum}. Once again, we will show that if $\bm b$ is a minimizer for \cref{eq:max-sum}, then all joints are maximizing. If this is the case, then the statement follows by setting $c_f$ to be the quantity in \cref{eq:max-sum}.

Assume for the sake of contradiction that not all joints are maximizing for some minimizer $\bm b$. Among all the minimizers $\bm b$ for \cref{eq:max-sum}, take one where the number of maximizing joints is minimum. Let $(a_p)_{p\in\cJ}\in \RR^{|\cJ|}$ be the parameters in \cref{lem:higher-cont-ineq} corresponding to this $\bm b$.

We claim that if $\ell \in \cL$ contains a non-maximizing joint, then every maximizing joint $p \in \ell$ has $b_{p,\ell} = 0$. Indeed, if not, then we can decrease $a_p$ at every maximizing joint $p \in \cJ$ by some sufficiently small $\epsilon > 0$, and for every $\ell\in \cL$ containing both maximizing and non-maximizing joints, we can strictly decrease any nonzero $b_{p,\ell}$ at each maximizing $p\in\ell$ and increase each $b_{p,\ell} < 1$ at each non-maximizing $p\in\ell$ accordingly so that the hypotheses (a) and (b) of \cref{lem:higher-cont-ineq} still hold. If $\ell$ contains a non-maximizing joint as well as a maximizing joint $p$ with $b_{p,\ell} > 0$, then this decrement contradicts the minimality of $\bm b$ or the number of maximizing joints.

Let $\cJ'$ be the set of non-maximizing joints and $\cL'$ be the set of lines in $\cL$ that contain non-maximizing joints. Then $\cJ'$ and $\cL'$ are nonempty by the assumption. Moreover, we saw that $b_{p,\ell} = 0$ whenever $(p,\ell) \in (\cJ \setminus \cJ') \times \cL'$. Also, there are no $(p,\ell) \in \cJ' \times (\cL\setminus \cL')$ with $p \in \ell$ by the definition of $\cL'$. Using $\sum_{p\in \ell}b_{p,\ell}=1$ for every $\ell \in \cL$, and denoting the quantity in \cref{eq:max-sum} by $s$, we have
\[
s|\cJ\setminus \cJ'| = \sum_{p \text{ maximizing}} \sum_{\ell \ni p} b_{p,\ell}
= \sum_{\substack{(p,\ell) \in (\cJ \setminus \cJ') \times (\cL \setminus \cL'):\\p\in\ell}} b_{p,\ell} 
= \sum_{\ell \in \cL \setminus \cL'} \sum_{p \in \ell} b_{p, \ell} 
= |\cL \setminus \cL'|\]
and
\[s|\cJ'| > \sum_{p\text{ non-maximizing}}\sum_{\ell\ni p}b_{p,\ell} = \sum_{\ell\in \cL'}\sum_{p\in \ell} b_{p,\ell} = |\cL'|.\]
So $|\cJ'|/|\cL'| > |\cJ|/|\cL|$, which contradicts the maximality assumption at the beginning of the proof. Hence the conclusion holds.
\end{proof}

\begin{proof}[Proof of \cref{thm:higher}]
Let $(\cJ, \cL, \cF)$ be a $(1,d-m;m,1)$-joints configuration with $|\cJ| = J, |\cL| = L$ and $|\cF|=F$. Choose $\cJ', \cL', a_p, b_{p,\ell}, c_f, s$ as in \cref{lem:same-sum}. Let $|\cJ'|=J'$ and $|\cL'|=L'$. Then
\[sJ' = \sum_{p\in \cJ'}\sum_{\ell\in \cL':\ell\ni p}b_{p,\ell} = \sum_{\ell\in \cL'}\sum_{p\in \ell \cap \cJ'}b_{p,\ell} = L'\]
and so
\[s = \frac{L'}{J'}\le \frac{L}{J}.\]
Now by \cref{lem:higher-cont-ineq} applied on $(\cJ', \cL', \cF)$,
\[\binom{d}{m}^{-1} \le \sum_{f\in \cF}c_f^{m} = Fs^{m}\le \frac{FL^{m}}{J^{m}}.\]
Thus
\[
J\le \binom{d}{m}^{1/m}LF^{1/m}. \qedhere 
\]
\end{proof}

\subsection*{Acknowledgments} 
YZ thanks Boris Bukh for hosting him during a visit at CMU in February 2018 and for discussions that eventually led to \cref{sec:higher-dimension}. 
We thank Ben Lund for sharing with us his independent results that improve the constant in \cref{thm:joints} to $C_d = 1$. 
After sharing our initial draft of the paper, we have learned that Tony Carbery and Marina Iliopoulou \cite{CI20} have independently proved Theorem 4.2 (without specifying an explicit constant) using a different technique.
We thank Adam Sheffer and Ruixiang Zhang for comments on the initial draft of the paper, and the anonymous referee for a careful reading and helpful comments.

Yu was supported by the MIT Undergraduate Research Opportunities Program. Zhao was supported by NSF Award DMS-1764176, the MIT Solomon Buchsbaum Fund, and a Sloan Research Fellowship.


\end{document}